\newcommand{\N}{\mathbb{N}}
\newcommand{\R}{\mathbb{R}}
\newcommand{\PP}{\mathbb{P}}
\newcommand{\su}{\mathfrak{s}\mathfrak{u}}
\newcommand{\tildeE}[1]{\tilde{\mathbb{E}}\left[#1\right]}
\newcommand{\kk}{\textbf{k}}
\newcommand{\nn}{\textbf{n}}
\newcommand{\Cc}{\mathbb{C}}
\newcommand{\Ee}{\mathbb{E}}
\newcommand{\Rr}{\mathbb{R}}
\newcommand{\Ss}{\mathbb{S}}
\newcommand{\Tt}{\mathbb{T}}
\newcommand{\Zz}{\mathbb{Z}}
\newcommand{\U}{\mathrm{U}}
\DeclareMathOperator{\Tr}{Tr}
\def\XXint#1#2#3{{\setbox0=\hbox{$#1{#2#3}{\int}$ }
\vcenter{\hbox{$#2#3$ }}\kern-.6\wd0}}
\numberwithin{equation}{section}
\providecommand{\U}[1]{\protect\rule{.1in}{.1in}}
\newtheorem{theorem}{Theorem}
\newtheorem{corollary}[theorem]{Corollary}
\newtheorem{definition}[theorem]{Definition}
\newtheorem{lemma}[theorem]{Lemma}
\newtheorem{proposition}[theorem]{Proposition}
\newtheorem{remark}[theorem]{Remark}
\title[Zeitlin's 2D Euler equations]{On the infinite dimension limit of invariant measures and solutions of Zeitlin's 2D Euler equations}
\author[F. Flandoli]{Franco Flandoli}
  \address{Scuola Normale Superiore, Piazza dei Cavalieri, 7, 56126 Pisa, Italy}
\author[U. Pappalettera]{Umberto Pappalettera}
  \address{Scuola Normale Superiore, Piazza dei Cavalieri, 7, 56126 Pisa, Italy}
\author[M. Viviani]{Milo Viviani$^*$}
  \address{CRM Ennio De Giorgi, Scuola Normale Superiore, Piazza dei Cavalieri, 3, Pisa, 56126, Italy}
  \email{$^{\ast}$corresponding author \href{mailto:milo.viviani(at)sns.it}{milo.viviani(at)sns.it}}
\keywords{Euler equations, invariant measures, geometric quantization}
\begin{document}

\begin{abstract}
In this work we consider a finite dimensional approximation for the 2D Euler equations on the sphere, proposed by V. Zeitlin, and show their convergence towards a solution to Euler equations with marginals distributed as the enstrophy measure. The method relies on nontrivial computations on the structure constants of $\mathbb{S}^2$, that appear to be new. In the last section we discuss the problem of extending our results to Gibbsian measures associated with higher Casimirs. 
\end{abstract}

\maketitle

%%%%%%%%%%%%%%%%%%%%%%%%%%%%%%%%%%%%%%%%%%%%%%%%%%%%%%%%%%%%%%%%%%%%%%%%%%%%%%%%%%%%%%%%%%%%%
\section{Introduction}
The 2D Euler equations are a fundamental mathematical model for studying ideal fluids, i.e. incompressible, inviscid, deformable bodies in which the dependence on one spatial dimension can be neglected. 
In particular, for barotropic incompressible fluids on some surface $S$ embedded in the Euclidean space $\Rr^3$, the Euler equations take the simple form
\begin{equation}\label{eq:Euler_eqn1}
\begin{array}{ll}
&\dot{\omega}=\nabla\psi^\perp\cdot\nabla\omega\\
&\Delta \psi=\omega,
\end{array}
\end{equation}
where $\omega$ and $\psi$ are respectively the vorticity and the stream function, $\nabla$ and $\Delta$ are respectively the Riemannian gradient and the Laplace--Beltrami operator on $S$.
One of the most intriguing aspects of these equations is the fact that they possess an infinite amount of conserved quantities, i.e. the integrals
\[
\int_S \psi\omega d\mbox{vol}_S, \hspace{1cm} \int_S f(\omega)d \mbox{vol}_S,
\]
where the first one represents the total kinetic energy and the second one the Casimir functions, defined for any $f\in C^1(\Rr)$.
These conservation laws are crucial in understanding the long-time behaviour of the fluid.
Indeed, as Kraichnan showed in \cite{Kr1967}, the conservation of both energy and enstrophy (i.e. the Casimir for $f(x)=x^2$) is responsible for the remarkable phenomenon of the formation and the persistence of large coherent vortices.

A first attempt to understand the statistical properties of 2D ideal fluids is due to Lars Onsager \cite{On1949}, who showed that in the simplified point-vortex model the equilibrium statistical mechanics predicts the concentration of vortices with the same sign.
More recently, the theory of Miller, Robert and Sommeria \cite{Mi1990,RoSo1991} extended the ideas of Onsager and Kraichnan taking into account all the invariants.
Via a mean field approach considering formally defined an invariant microcanonical measure for the Euler equations, they derive a functional relationship between the equilibrium average vorticity and the stream function \cite{BoVe2012}.
Even though the MRS theory has been quite recognized, several critical aspects and discrepancies with respect to experiments and numerical simulations have been found \cite{DrQiMa2015,MoVi2020}.
From a mathematical point of view, the MRS theory is purely formal and does not give any precise definition of the invariant measures considered.

At the moment only energy and enstrophy invariant measures have been rigorously constructed, and extending the existing results to other Casmirs is still an open problem.
Albeverio and Cruzeiro in \cite{AlCr1990} showed the existence of solutions to the 2D Euler equations as stochastic processes limit of Galerkin approximation of the Euler equations with vorticity in $H^{-\alpha}$, such that the enstrophy and the (renormalized) energy Gibbs measures are invariant for the flow.

In this paper, we consider a different finite dimensional approximation for the 2D Euler equations, valid on any orientable compact surface. 
This model was derived by V. Zeitlin \cite{ze1,ze2}, based on the theory of geometric quantization of compact Kähler manifolds \cite{BoMeSc1994}.
One of the main feature of Zeitlin's finite dimensional model is to posses a number of conserved quantities, which is proportional to the level of discretization and such that, for a sufficiently regular vorticity field, they approximate the original Casimirs of the 2D Euler equations.
In particular, for any level of discretization, the Zeitlin's model admits energy and enstrophy analogue, which are simply a spectral truncation of the original ones.

The aim of this work is to set a new theoretical framework in which developing a rigorous statistical theory for the Euler equations.
Indeed, one of the main open problems is defining Gibbsian invariant measures which takes into account other conserved quantities than energy and esntrophy.
Since these measures have distributional support, it is not clear, even up to renormalization, how to deal with higher order Casimirs of the Euler equations.
In this paper, we show that it is possible starting from the Zetlin's model to recover the results of Albeverio and Cruzeiro in \cite{AlCr1990}, but also that the Zeitlin’s model gives new insights in the problem, that in the future could allow to deal also with the other Casimirs.
Furthermore, a main novelty of our work is that we perform explicit calculations on the structure constants for the 2-sphere $\Ss^2$ (cfr. \autoref{sec:AppA}), which are technically more involved than those on the flat 2-torus (recalled in \autoref{sec:AppB} for completeness).

The paper is structured as follows.
In \autoref{sec:def} we present the geometric background necessary to set up the quantized version of Euler equations on $\mathbb{S}^2$: we introduce isometries between subspaces of functions on $\mathbb{S}^2$ and spaces of matrices in the Lie algebra $\su(N)$, as well as suitable Sobolev norms on $\su(N)$.
In \autoref{sec:gaussian_measures} we rigorously define a sequence of Gaussian measure on $\su(N)$ whose pull-back converges weakly towards the enstrophy measure, and prove useful bounds on stationary solutions of quantized Euler equations.
In \autoref{sec:limit} we show the existence of a subsequence of solutions of quantized Euler equations converging towards a limiting process $\tilde{\omega}$ taking values in a space of distributions: as a consequence of previous results, we are able to prove that $\tilde{\omega}$ is a stationary process with marginals distributed as the enstrophy measure, and that it solves a symmetrized version of Euler equations on $\mathbb{S}^2$.
Finally, in \autoref{sec:open} we discuss open problems, in particular concerning the difficulties encountered in trying to solve Euler equations having as invariant measure a Gibbsian measure associated to higher-order Casimirs, and we point out a tentative approach involving the evaluation of line integrals and Kelvin Theorem. 

\section{Fundamental concepts and definitions} \label{sec:def}
In this section, we introduce the fundamental concepts and notations that we employ throughout the paper.
In particular, in order to introduce the Zeitlin's model, we observe that the right hand side in the first equation of \eqref{eq:Euler_eqn1} defines a Poisson bracket denoted by:
\begin{equation}\label{eq:poisson_brack_sph}
\lbrace\psi,\omega\rbrace:=\nabla\psi^\perp\cdot\nabla\omega.
\end{equation}
The Poisson bracket notation highlights the infinite dimensional \textit{Lie--Poisson structure} of the Euler equations.
The main idea of Zeitlin's model is to define a finite dimensional approximation of the Euler equations, which retains the Lie--Poisson structure of the equations.
The functional space of vorticities is replaced, for any $N\geq 2$ by the Lie algebra $\su(N)$, defined as the tangent at the identity of $SU(N)$, which is the real vector space of dimension $d_N:=N^2-1$ of skew-Hermitian matrices with zero trace and Lie brackets $[W,V] \coloneqq WV-VW$, for $V,W\in\su(N)$.
The Laplace--Beltrami operator is replaced by a linear operator $\Delta_N$ defined on $\su(N)$, with the same spectrum (up to truncation) of $\Delta$.
In this paper, we perform our calculations on the 2-sphere $\Ss^2$ embedded in the Euclidean space $\Rr^3$ (in \autoref{sec:AppB} we show that the same results can be derived for the Zeitlin's model on the 2D flat torus).

The Zeitlin's model relies on the theory of \textit{geometric quantization} of the Poisson algebra $(C^\infty(\mathbb{S}^2),\{\cdot,\cdot\})$, \cite{BoMeSc1994}.
Let $Y_{\ell,m} \in C^\infty(\mathbb{S}^2)$ and $T^N_{\ell,m} \in \su(N)$ denote respectively the standard spherical harmonics and spherical matrices defined in \cite{HopYau1998}, for $\ell \in \mathbb{N}$, $m \in \mathbb{Z}$, $|m| \leq \ell$.
For this domain, the relationship between the functions and matrices is explicitly given in terms of spherical harmonics and spherical matrices.
Let us define the linear projectors: $\Pi_N : C^\infty(\mathbb{S}^2) \to \su(N)$, $N \in \mathbb{N}$, $N \geq 2$ satisfying:
\begin{itemize}
\item
for every $f,g \in C^\infty(\mathbb{S}^2)$, if $\|\Pi_N f - \Pi_N g \|_{\su(N)} \to 0$ as $N \to \infty$ then $f=g$;
\item
for every $f,g \in C^\infty(\mathbb{S}^2)$, $\Pi_N \{f,g\} = N^{3/2} [ \Pi_N f, \Pi_N g ] + O(1/N)$;
\item
$\Pi_N Y_{\ell,m} = T^N_{\ell,m}$, $\ell = 1,\dots,N-1$, $|m| \leq \ell$ is a basis of $\su(N)$.
\end{itemize}
Let us denote $L^2_N(\mathbb{S}^2) \coloneqq Span \left\{ Y_{\ell,m} , \,\ell = 1,\dots,N-1, |m| \leq \ell \right\}$, immersed in $C^\infty(\mathbb{S}^2)$ with immersion $\iota_N$.
The restriction of $\Pi_N$ to $L^2_N(\mathbb{S}^2)$ is isometric: for every $\ell,\ell' = 1,\dots,N-1$, $|m| \leq \ell$, $|m'| \leq \ell'$
\begin{align} \label{eq:isometry}
\delta_{\ell,\ell'} \delta_{m,m'}
=
\langle Y_{\ell,m},Y_{\ell',m'} \rangle_{L^2(\mathbb{S}^2)}
=
\langle T^N_{\ell,m},T^N_{\ell',m'} \rangle_{\su(N)}
\coloneqq
Tr((T^N_{\ell,m})^* T^N_{\ell',m'}).
\end{align} 
In the following, we denote $\tilde{j}_N : \su(N) \to L^2_N(\mathbb{S}^2)$ the inverse of the restriction of $\Pi_N$ to $L^2_N(\mathbb{S}^2)$, and $j_N = \iota_N \circ \tilde{j}_N : \su(N) \to C^\infty(\mathbb{S}^2)$. It is easy to check that $\Pi_N \circ j_N = Id_{\su(N)}$ and $j_N \circ \Pi_N$ is the orthogonal projector from $C^\infty(\mathbb{S}^2)$ onto $L^2_N(\mathbb{S}^2)$.

The discrete Laplacian $\Delta_N:\su(N) \to \su(N)$ acts on the basis $T^N_{\ell,m}$ as
\begin{align*}
\Delta_N  T^N_{\ell,m} = -\ell(\ell+1) T^N_{\ell,m}.
\end{align*}
Since also $\Delta Y_{\ell,m} = -\ell (\ell+1) Y_{\ell,m}$, we deduce for every $s \in \R$
\begin{align*}
\Pi_N (-\Delta)^s = (-\Delta_N)^s \Pi_N, 
\end{align*}
and thus we can define for $\omega = j_N W$
\begin{align} \label{eq:sobolev_norm}
\| W \|_{H^s(\su(N))} 
&\coloneqq 
\| \omega \|_{H^s(\mathbb{S}^2)}
=
\| (-\Delta)^{s/2} \omega \|_{L^2(\mathbb{S}^2)}
\\
&=
\| \Pi_N(-\Delta)^{s/2} \omega \|_{\su(N)}
=
\| (-\Delta_N)^{s/2} W \|_{\su(N)}, \nonumber
\end{align}
that is a good Sobolev norm on $\su(N)$, in the sense that Aubin-Lions and Simon compactness criterions hold \cite{Si1986}. 

The quantized Euler equations can be written as \cite{MoVi2020}:
\begin{align} \label{eq:euler}
\dot{W} = [P,W]_N=N^{3/2}[P,W] , \quad \Delta_N P = W. 
\end{align}
%\todo[inline]{la notazione $[P,W]_N$ non è stata introdotta precedentemente. Per evitare complicazioni, io partirei direttamente da $\dot{W} = N^{3/2}[P,W]$, citando uno dei lavori di Milo (o altri) a supporto di questa scelta.}
These equations have as conserved quantities the energy
\[
H(W) \coloneqq -1/2 \Tr(P^*W),
\]
the linear momentum
\[
M \coloneqq (W_{1,1},W_{1,0},W_{1,-1}),
\quad
W_{\ell,m} \coloneqq \langle W, Y_{\ell,m} \rangle_{{\su(N)}},
\]
and the Casimirs
\begin{align*}
C_k(W) \coloneqq Tr(W^k),
\quad k \in \N.
\end{align*}
Notice that for $k=2$ it holds $C_2(W) = - \|W\|^2_{\su(N)}$.

\section{Gaussian measures}\label{sec:gaussian_measures}
In this section we introduce the Gaussian measure on $\su(N)$ that permits us to prove the existence of stationary solutions to quantized Euler equations \eqref{eq:euler}. 
For this purpose, let $Q_N:\su(N) \to \su(N)$ be the covariance operator defined as 
\begin{align*}
\langle Q_N W , W' \rangle
\coloneqq
\frac1{2Z^{d_N}}
\int_{\su(N)} 
\langle \tilde{W} , W \rangle
\langle W' , \tilde{W} \rangle
e^{-\frac12 \|\tilde{W}\|^2} d\tilde{W},
\quad
W,W' \in \su(N),
\end{align*}
where $Z=\int_\Cc e^{-\frac12 |x|^2} dx$ is a suitable renormalization constant, and $d_N = N^2-1$. 
The covariance operator $Q_N$ is just a convenient rewriting of the identity operator on $\su(N)$, which is the content of the following:
\begin{lemma}
It holds $Q_N = Id_{\su(N)}$.
\end{lemma}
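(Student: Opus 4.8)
The plan is to recognize $Q_N$ as the covariance (second-moment) operator of the centered Gaussian probability measure
\[
\mu_N(d\tilde W) \coloneqq \frac{1}{2Z^{d_N}}\, e^{-\frac12 \norm{\tilde W}^2}\, d\tilde W
\]
on the finite-dimensional real inner-product space $(\su(N), \inner{\cdot,\cdot})$, and to reduce the statement to the classical fact that a standard isotropic Gaussian — one whose density is proportional to $e^{-\frac12\norm{\cdot}^2}$ with respect to the inner product — has covariance equal to the identity. Since both sides of $Q_N = Id_{\su(N)}$ are symmetric operators on $\su(N)$, it suffices to check $\inner{Q_N W, W'} = \inner{W, W'}$ for all $W, W' \in \su(N)$, which I would do by passing to coordinates.

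First I would fix a real orthonormal basis $(e_\alpha)_{\alpha=1}^{d_N}$ of $\su(N)$ for the inner product $\inner{\cdot,\cdot}$ (for instance one built from the spherical matrices $T^N_{\ell,m}$, which satisfy the orthonormality relations \eqref{eq:isometry}), and write $\tilde W = \sum_\alpha x_\alpha e_\alpha$. Then $\norm{\tilde W}^2 = \sum_\alpha x_\alpha^2$, $\inner{\tilde W, W} = \sum_\alpha x_\alpha \inner{e_\alpha, W}$, and $d\tilde W$ becomes Lebesgue measure $\prod_\alpha dx_\alpha$ on $\R^{d_N}$, so that
\[
\inner{Q_N W, W'} = \frac{1}{2Z^{d_N}} \sum_{\alpha,\beta} \inner{e_\alpha, W}\inner{e_\beta, W'} \int_{\R^{d_N}} x_\alpha x_\beta\, e^{-\frac12 \sum_\gamma x_\gamma^2}\, \prod_\gamma dx_\gamma.
\]
The product structure of the weight makes the last integral factorize: the off-diagonal terms $\alpha \neq \beta$ vanish by oddness of $x \mapsto x e^{-x^2/2}$, while each diagonal term equals $\big(\int_\R x^2 e^{-x^2/2}\,dx\big)\big(\int_\R e^{-x^2/2}\,dx\big)^{d_N-1}$. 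Using $\int_\R x^2 e^{-x^2/2}\,dx = \int_\R e^{-x^2/2}\,dx$, every diagonal term equals the full Gaussian partition function $\int_{\su(N)} e^{-\frac12\norm{\tilde W}^2}\, d\tilde W$, and I obtain $\inner{Q_N W, W'} = \kappa_N \inner{W, W'}$ with $\kappa_N = (2Z^{d_N})^{-1}\int_{\su(N)} e^{-\frac12\norm{\tilde W}^2}\, d\tilde W$.

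It then remains only to verify that $\kappa_N = 1$, i.e. that the constant $2Z^{d_N}$ built into the definition is exactly the Gaussian partition function in the chosen coordinates; this is precisely the purpose for which $Z$ was introduced, and it amounts to a routine matching of the reference measure $d\tilde W$ against the one-complex-variable normalization $Z = \int_\Cc e^{-\frac12|x|^2}\,dx$. I expect this normalization bookkeeping to be the only delicate point, since everything else is the standard moment computation; there is no conceptual obstacle, merely the need to keep the coordinate and measure conventions consistent. Once $\kappa_N = 1$ is confirmed, the identity $\inner{Q_N W, W'} = \inner{W, W'}$ holds for all $W, W'$, and hence $Q_N = Id_{\su(N)}$.
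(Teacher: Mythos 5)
Your computation is structurally the same as the paper's --- expand in an orthonormal basis, kill the off-diagonal terms via vanishing first moments, and reduce the diagonal to a normalization check --- but the step you deferred as ``routine bookkeeping'' is precisely where your version breaks. With a \emph{real} orthonormal basis and $d\tilde W$ read as Lebesgue measure on $\R^{d_N}$, the Gaussian partition function is $(2\pi)^{d_N/2}$, whereas $2Z^{d_N} = 2(2\pi)^{d_N}$, because $Z = \int_\Cc e^{-\frac12 |x|^2}\,dx = 2\pi$ is a \emph{two-dimensional} integral over the complex plane. Your constant is therefore $\kappa_N = \tfrac12 (2\pi)^{-d_N/2} \neq 1$, so your argument as written proves $Q_N = \kappa_N\, Id_{\su(N)}$ with the wrong constant, contradicting the lemma rather than establishing it. There is no consistent reading of the definition in real coordinates under which $2Z^{d_N}$ matches the partition function: the prefactor $\frac{1}{2Z^{d_N}}$ is calibrated to a complex-coordinate computation.

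The paper's proof makes this explicit: it expands $\tilde W = \sum_{k=1}^{d_N} \tilde c_k T^N_k$ with \emph{complex} coefficients, reads $d\tilde W$ as Lebesgue measure on $\Cc^{d_N}$, and uses the Hermitian pairing $\langle W, \tilde W\rangle = \sum_k \overline{c_k}\,\tilde c_k$. Then each of the $d_N - 1$ spectator coordinates integrates to a factor $Z$, the cross terms die because $\int_\Cc \tilde c\, e^{-\frac12|\tilde c|^2}\, d\tilde c = 0$ (the analogue of your oddness argument), and the single diagonal coordinate contributes $\int_\Cc \tilde c\,\overline{\tilde c}\, e^{-\frac12|\tilde c|^2}\, d\tilde c = 2Z$, so the prefactor gives exactly $\frac{2Z \cdot Z^{d_N-1}}{2Z^{d_N}} = 1$. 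In short: your moment computation and off-diagonal analysis are correct and parallel the paper's, but to close the proof you must carry them out in the complex coordinates that the definition of $Q_N$ (and of $Z$) presupposes; in your real-coordinate setup the claimed identity $\kappa_N = 1$ is false.
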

\begin{proof}
For notational convenience, let us relabel the basis $(T^N_{\ell,m})_{\ell=1,\dots,N-1, |m|\leq \ell}$ as $(T^N_k)_{k=1,\dots,d_N}$. 
Let $W = \sum_{k=1}^{d_N} c_k T^N_k$, $W' = \sum_{k=1}^{d_N} c'_k T^N_k$ and $\tilde{W} = \sum_{k=1}^{d_N} \tilde{c}_k T^N_k$.
We have
\begin{align*}
\langle Q_N W , W' \rangle
&=
\frac{1}{2Z^{d_N}}
\int_{\su(N)} 
\langle W , \tilde{W} \rangle
\langle \tilde{W}, W' \rangle
e^{-\frac12 \|\tilde{W}\|^2} d\tilde{W}
\\
&=
\frac{1}{2Z^{d_N}}
\int_{\Cc^{d_N}}
\sum_{k=1}^{d_N}
\overline{c_k} \tilde{c}_k
\sum_{h=1}^{d_N}
\overline{\tilde{c}_h} c'_h
\prod_{j=1}^{d_N}
e^{-\frac12 |\tilde{c}_j|^2} d\tilde{c}_j
\\
&=
\frac{1}{2Z^{d_N}}
\sum_{k,h=1}^{d_N}\int_{\Cc^{d_N}}
\overline{c_k} \tilde{c}_k
\overline{\tilde{c}_h} c'_h
\prod_{j=1}^{d_N}
e^{-\frac12 |\tilde{c}_j|^2} d\tilde{c}_j.
\end{align*}
Let us rearrange the product inside the integral in the following way. Denote $\{k,h\}$ the set with elements $k$ and $h$, and let $card\{k,h\}$ be its cardinality, so that $card\{k,h\}=1$ if $k=h$ and $card\{k,h\}=2$ if $k\neq h$.
Since in the previous expression the integration with respect to $d\tilde{c}_j$ produces only a factor $Z$ for $j \neq k,h$, we can rewrite 
\begin{align*}
\langle Q_N W , W' \rangle
&=
\frac{1}{2Z^{card\{k,h\}}}
\sum_{k,h=1}^{d_N}\int_{\Cc^{card\{k,h\}}} 
\overline{c_k} \tilde{c}_k
\overline{\tilde{c}_h} c'_h
\prod_{j \in \{k,h\}}
e^{-\frac12 |\tilde{c}_j|^2} d\tilde{c}_j
\\
&= 
\frac{1}{2Z}
\sum_{k=1}^{d_N} \int_{\Cc} 
\overline{c_k} \tilde{c}_k
\overline{\tilde{c}_k} c'_k
e^{-\frac12 |\tilde{c}_k|^2} d\tilde{c}_k
\\
&\quad+
\frac{1}{2Z^2}
\left(
\sum_{k=1}^{d_N}
\int_{\Cc} 
\overline{c_k} \tilde{c}_k
e^{-\frac12 |\tilde{c}_k|^2} d\tilde{c}_k
\right)
\left(
\sum_{h=1}^{d_N}
\int_{\Cc} 
\overline{\tilde{c}_h} c'_h
e^{-\frac12 |\tilde{c}_h|^2} d\tilde{c}_h
\right)
\\
&= \sum_{k} \overline{c_k} c'_k = \langle W,W' \rangle,
\end{align*}
where we deduce the last line from $\int_{\Cc} \tilde{c}_k e^{-\frac12 |\tilde{c}_k|^2} d\tilde{c}_k = 0$ and $\int_{\Cc} \tilde{c}_k \overline{\tilde{c}_k} e^{-\frac12 |\tilde{c}_k|^2} d\tilde{c}_k = 2Z$.
\end{proof}

\begin{corollary} \label{cor:WN}
Fix $\omega \in C^\infty(\mathbb{S}^2)$, and denote $W^{(N)}_\omega \coloneqq \Pi_N \omega$. Then
\begin{align*}
\lim_{N \to \infty} \langle Q_N W^{(N)}_\omega,W^{(N)}_{\omega'} \rangle
=
\int_{\mathbb{S}^2} \overline{\omega(x)}{\omega'(x)} d\mbox{vol}_S.
\end{align*}
\end{corollary}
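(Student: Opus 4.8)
The plan is to reduce the Gaussian expression to a plain inner product by means of the previous Lemma, and then to identify that inner product with a truncated Parseval sum converging to the $L^2$ pairing. First, by the previous Lemma we have $Q_N = Id_{\su(N)}$, so that for every $N$
\[
\langle Q_N W^{(N)}_\omega, W^{(N)}_{\omega'}\rangle = \langle \Pi_N\omega, \Pi_N\omega'\rangle_{\su(N)},
\]
and it suffices to study the large-$N$ behaviour of the deterministic quantity $\langle \Pi_N\omega,\Pi_N\omega'\rangle_{\su(N)}$.

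Next I would expand both functions in spherical harmonics, $\omega = \sum_{\ell\geq 0,\,|m|\leq\ell}\hat\omega_{\ell,m}Y_{\ell,m}$ with $\hat\omega_{\ell,m}=\langle Y_{\ell,m},\omega\rangle_{L^2(\Ss^2)}$, and similarly for $\omega'$. Since $j_N\circ\Pi_N$ is the orthogonal projector of $C^\infty(\Ss^2)$ onto $L^2_N(\Ss^2)=Span\{Y_{\ell,m}:\ell=1,\dots,N-1\}$ and $j_N T^N_{\ell,m}=Y_{\ell,m}$, injectivity of $j_N$ forces
\[
\Pi_N\omega = \sum_{\ell=1}^{N-1}\sum_{|m|\leq\ell}\hat\omega_{\ell,m}\,T^N_{\ell,m}.
\]
Invoking the isometry relation \eqref{eq:isometry}, i.e. orthonormality of the spherical matrices in $\su(N)$, then yields the truncated Parseval identity
\[
\langle \Pi_N\omega,\Pi_N\omega'\rangle_{\su(N)} = \sum_{\ell=1}^{N-1}\sum_{|m|\leq\ell}\overline{\hat\omega_{\ell,m}}\,\hat\omega'_{\ell,m}.
\]
Equivalently, since $j_N$ is an $L^2$-isometry onto $L^2_N(\Ss^2)$, the left-hand side equals $\langle P_N\omega,P_N\omega'\rangle_{L^2(\Ss^2)}$, where $P_N=j_N\circ\Pi_N$ denotes the $L^2$-orthogonal projection.

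Finally I would pass to the limit. The coefficient sequences are square-summable (indeed rapidly decaying, as $\omega,\omega'$ are smooth), so by Cauchy--Schwarz the truncated sums converge absolutely and
\[
\lim_{N\to\infty}\langle\Pi_N\omega,\Pi_N\omega'\rangle_{\su(N)} = \sum_{\ell=1}^{\infty}\sum_{|m|\leq\ell}\overline{\hat\omega_{\ell,m}}\,\hat\omega'_{\ell,m} = \int_{\Ss^2}\overline{\omega}\,\omega'\,d\mbox{vol}_S - \overline{\hat\omega_{0,0}}\,\hat\omega'_{0,0}.
\]
The only subtlety — and the single point requiring genuine care — is the constant ($\ell=0$) mode, which has no counterpart in the trace-free algebra $\su(N)$ and is therefore annihilated by $\Pi_N$. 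Since vorticity fields on $\Ss^2$ have vanishing average, in the intended range of application $\hat\omega_{0,0}=\hat\omega'_{0,0}=0$ and the displayed limit equals $\int_{\Ss^2}\overline{\omega}\,\omega'\,d\mbox{vol}_S$, as claimed; equivalently, one reads the statement for mean-zero $\omega,\omega'$. All remaining steps are routine bookkeeping with the two orthonormal bases and Parseval's identity, so I expect no serious obstacle beyond correctly accounting for this constant mode.
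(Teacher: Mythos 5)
Your proposal is correct and takes essentially the same route as the paper: reduce via the lemma $Q_N=\mathrm{Id}_{\su(N)}$ to the plain inner product $\langle \Pi_N\omega,\Pi_N\omega'\rangle_{\su(N)}$, then invoke the isometry \eqref{eq:isometry} (a truncated Parseval identity) and pass to the limit. Your careful handling of the $\ell=0$ mode is a point the paper leaves implicit — the statement is indeed to be read for mean-zero fields, consistent with the spaces $L^2_N(\mathbb{S}^2)$ being spanned by $Y_{\ell,m}$ with $\ell\geq 1$ — so your version is, if anything, slightly more precise.
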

\begin{proof}
It follows immediately from \eqref{eq:isometry} and the identity $\langle Q_N W^{(N)}_\omega,W^{(N)}_{\omega'} \rangle=\langle W^{(N)}_\omega,W^{(N)}_{\omega'} \rangle$, given by the previous lemma. 
\end{proof}

Denote $\mu_N(dW) \coloneqq \frac1{Z^{d_N}} e^{-\frac12 \|W\|_{\su(N)}^2} dW$ the Gaussian measure on $\su(N)$ with covariance $Q_N$, and let $\nu_N$ be its pull-back on $C^\infty(\mathbb{S}^2)$ given by $\nu_N \coloneqq (j_N)_* \mu_N$. The covariance of $\nu_N$ is given by $\tilde{Q}_N = j_N \circ \Pi_N$ (the orthogonal projector from $C^\infty(\mathbb{S}^2)$ to $L^2_N(\mathbb{S}^2)$); equivalently, the reproducing kernel of $\nu_N$ is $L^2_N(\mathbb{S}^2)$.

The \emph{enstrophy measure} is defined as the centered Gaussian measure $\nu$ on $H^{-1-}(\mathbb{S}^2) \coloneqq \cap_{s>0} H^{-1-s}(\mathbb{S}^2)$ with covariance $Q=Id$, or equivalently with reproducing kernel $L^2(\mathbb{S}^2)$.
The previous corollary implies $\nu_N \rightharpoonup \nu$ as measures on $H^{-1-}(\mathbb{S}^2)$.

We can now state the main results of this section.
\begin{lemma}
For every $\epsilon>0$ and $p \in [1,\infty)$ there exists a finite constant $C_{\epsilon,p}$ such that
\begin{align*}
\int_{\su(N)} \| W \|^p_{H^{-1-\epsilon}(\su(N))} \mu_N(dW)
\leq
C_{\epsilon,p}.
\end{align*}
\end{lemma}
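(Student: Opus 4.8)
The plan is to reduce the general $L^p$ bound to the case $p=2$ by exploiting the equivalence of moments for Gaussian measures, and then to compute the second moment explicitly as the trace of a power of the discrete Laplacian, whose (uniform in $N$) finiteness follows from the growth of its eigenvalues. First I would observe that, under $\mu_N$, the matrix $W$ is a centered Gaussian vector in the Euclidean space $(\su(N),\langle\cdot,\cdot\rangle)$ with covariance $Q_N=Id_{\su(N)}$, by the first lemma of this section. Since $\|W\|_{H^{-1-\epsilon}(\su(N))}=\|(-\Delta_N)^{(-1-\epsilon)/2}W\|_{\su(N)}$ is the Euclidean norm of a fixed linear image of $W$, it is itself the norm of a centered Gaussian vector, and for such quantities all moments are comparable: there is a constant $\kappa_p$, depending only on $p$ and \emph{not} on the dimension $d_N$, such that
\[
\left(\int_{\su(N)} \|W\|^p_{H^{-1-\epsilon}(\su(N))}\,\mu_N(dW)\right)^{1/p}
\leq
\kappa_p\left(\int_{\su(N)} \|W\|^2_{H^{-1-\epsilon}(\su(N))}\,\mu_N(dW)\right)^{1/2}.
\]
This Kahane--Khintchine type inequality (equivalently, Gaussian hypercontractivity) is the reason the final constant does not blow up with $N$.

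Next I would evaluate the second moment as a trace. Setting $A\coloneqq(-\Delta_N)^{-1-\epsilon}$, which is self-adjoint and positive on $\su(N)$, the covariance identity $Q_N=Id$ yields
\[
\int_{\su(N)} \|W\|^2_{H^{-1-\epsilon}(\su(N))}\,\mu_N(dW)
=
\int_{\su(N)} \langle A W,W\rangle\,\mu_N(dW)
=
\Tr(A\,Q_N)
=
\Tr\big((-\Delta_N)^{-1-\epsilon}\big).
\]
Because $\Delta_N T^N_{\ell,m}=-\ell(\ell+1)T^N_{\ell,m}$ and the eigenvalue $\ell(\ell+1)$ has multiplicity $2\ell+1$, this trace equals $\sum_{\ell=1}^{N-1}(2\ell+1)\big(\ell(\ell+1)\big)^{-1-\epsilon}$. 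Using $\ell(\ell+1)\geq \ell^2$ and $2\ell+1\leq 3\ell$, each summand is at most $3\,\ell^{-1-2\epsilon}$, so the partial sums are dominated by $\sum_{\ell\geq 1}3\,\ell^{-1-2\epsilon}=:C_\epsilon<\infty$ for every $\epsilon>0$, uniformly in $N$. Combining this with the moment comparison gives the claim with $C_{\epsilon,p}=(\kappa_p)^p\,(C_\epsilon)^{p/2}$.

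I expect the only genuinely delicate point to be the uniformity in $N$: both ingredients must be dimension-free. The independence of $\kappa_p$ from $N$ is exactly the content of the Gaussian moment equivalence, which holds verbatim for Gaussian vectors in any (finite- or infinite-dimensional) separable Hilbert space; the independence of the second moment rests on the convergence of $\sum_\ell \ell^{-1-2\epsilon}$, i.e. on the strict inequality $-1-\epsilon<-1$. This is precisely the Sobolev threshold that makes the limiting enstrophy measure $\nu$ supported in $H^{-1-}(\mathbb{S}^2)$, so the bound is sharp in the sense that it would fail at $\epsilon=0$.
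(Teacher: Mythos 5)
Your proof is correct, but it takes a genuinely different route from the paper's. The paper disposes of the lemma in two lines: by the isometry \eqref{eq:sobolev_norm} and the change of variables $\nu_N=(j_N)_*\mu_N$ it rewrites the integral as $\int_{C^\infty(\mathbb{S}^2)}\|\omega\|^p_{H^{-1-\epsilon}(\mathbb{S}^2)}\,\nu_N(d\omega)$ and then simply cites the classical moment bounds for (projections of) white noise on the sphere \cite[Section 3]{AlFe2004}. You instead stay entirely on $\su(N)$ and prove the estimate from scratch: dimension-free Gaussian moment equivalence (Kahane--Khintchine/hypercontractivity) to reduce to $p=2$, followed by the exact trace computation $\mathbb{E}^{\mu_N}\left[\|W\|^2_{H^{-1-\epsilon}(\su(N))}\right]=c\,\Tr\big((-\Delta_N)^{-1-\epsilon}\big)=c\sum_{\ell=1}^{N-1}(2\ell+1)\big(\ell(\ell+1)\big)^{-1-\epsilon}$, bounded uniformly in $N$ precisely because $\epsilon>0$; here $c$ is a harmless absolute constant that depends on the paper's normalization of $Q_N$ (its definition carries a factor $\tfrac12$ and complex coordinates with $\mathbb{E}\left[|\tilde c_k|^2\right]=2$, so your $\Tr(AQ_N)$ may pick up a factor $2$, which is immaterial). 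These two ingredients are essentially what is hidden inside the paper's citation, so the mathematical content agrees, but your version is self-contained and makes transparent both sources of uniformity in $N$ (the dimension-free moment comparison and the summability of $\ell^{-1-2\epsilon}$) as well as the sharpness of the threshold $\epsilon=0$; the paper's version buys brevity and fits its overall strategy of transporting everything to the pull-back measures $\nu_N$ on $\mathbb{S}^2$, a device it reuses repeatedly in the next proofs. Both arguments are valid.
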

\begin{proof}
Let $\omega = j_N W$. By \eqref{eq:sobolev_norm} and $\nu_N = (j_N)_* \mu_N$, change of variables yields
\begin{align*}
\int_{\su(N)} \| W \|^p_{H^{-1-\epsilon}(\su(N))} \mu_N(dW)
=
\int_{C^\infty(\mathbb{S}^2)} \| \omega \|^p_{H^{-1-\epsilon}(\mathbb{S}^2)} \nu_N(d \omega).
\end{align*}
For the measure $\nu_N$ the desired bound is classical, see for instance \cite[Section 3]{AlFe2004}.
\end{proof}

\begin{corollary}\label{cor:bounds}
Let $W^N_{W_0}:\Omega_N \times \R \to \su(N)$ be the solution of \eqref{eq:euler} with initial condition $W_0$ distributed as $\mu_N$.
For fixed $T>0$ denote $\hat{W}^N_{W_0}:\Omega_N \times [0,T] \to \su(N)$ the accelerated process
\begin{align*}
\hat{W}^N_{W_0} (t) = W^N_{W_0} (N^{3/2} t),
\quad
t \in [0,T]. 
\end{align*} 
Then for every $\epsilon>0$, $p \in [1,\infty)$ and $\kappa$ sufficiently large there exists a finite constant $C_{\epsilon,p,\kappa}$ such that  
\begin{align*}
\sup_{N \in \N} \mathbb{E}^{\mu_N}
\left[
\int_0^T
\| \hat{W}^N_{W_0}(t) \|^p_{H^{-1-\epsilon}(\su(N))} dt
+
\int_0^T
\| \frac{d}{dt} \hat{W}^N_{W_0}(t) \|^2_{H^{-\kappa}(\su(N))} dt
\right]
\leq
T C_{\epsilon,p,\kappa}.
\end{align*}

Similarly, let $\omega^N_{\omega_0}:\Omega_N \times [0,T] \to C^\infty(\mathbb{S}^2)$ be given by $\omega^{\omega_0} = j_N \hat{W}^N_{W_0}$. Then 
\begin{align*}
\sup_{N \in \N} \mathbb{E}^{\nu_N}
\left[
\int_0^T
\| \omega^N_{\omega_0}(t) \|^p_{H^{-1-\epsilon}(\mathbb{S}^2)} dt
+
\int_0^T
\| \frac{d}{dt} \omega^N_{\omega_0}(t) \|^2_{H^{-\kappa}(\mathbb{S}^2)} dt
\right]
\leq
T C_{\epsilon,p,\kappa}.
\end{align*}
\end{corollary}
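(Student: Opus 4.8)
The plan is to exploit that $\mu_N$ is invariant for the flow \eqref{eq:euler}, so that both integrands become stationary in $t$ and the time integrals reduce to static Gaussian expectations. Invariance of $\mu_N$ is the key structural input. Writing $A(W)=N^{3/2}\Delta_N^{-1}W\in\su(N)$, the equation reads $\dot W=[A(W),W]$, which is isospectral: $W(t)$ stays in the adjoint orbit (conjugacy class) of $W_0$, so every Casimir $C_k(W)=\Tr(W^k)$ is conserved and in particular the enstrophy $\|W\|_{\su(N)}^2=-C_2(W)$ is preserved along trajectories, hence so is the density $e^{-\frac12\|W\|^2}$. Moreover the vector field $W\mapsto[A(W),W]$ is divergence-free for Lebesgue measure $dW$: its linearization $\xi\mapsto[A(W),\xi]+[A(\xi),W]$ has trace equal to $\Tr_{\mathbb{R}}(\ad_{A(W)})$ plus a constant multiple of $\Tr_{\mathbb{R}}(\ad_W\circ\Delta_N^{-1})$; the first vanishes since $\ad$ is traceless, and the second is the trace of the composition of the skew-adjoint operator $\ad_W$ with the self-adjoint operator $\Delta_N^{-1}$, hence vanishes. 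Therefore $\mu_N$ is invariant, and for $W_0\sim\mu_N$ each marginal $\hat W^N_{W_0}(t)$ is distributed as $\mu_N$.

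Given this, the first term is immediate: by Tonelli and stationarity,
\begin{align*}
\mathbb{E}^{\mu_N}\left[\int_0^T\|\hat W^N_{W_0}(t)\|^p_{H^{-1-\epsilon}(\su(N))}\,dt\right]
=\int_0^T\int_{\su(N)}\|W\|^p_{H^{-1-\epsilon}(\su(N))}\,\mu_N(dW)\,dt\leq TC_{\epsilon,p},
\end{align*}
the inner integral being bounded uniformly in $N$ by the preceding Lemma. For the second term I would first differentiate the accelerated process: by the chain rule and \eqref{eq:euler},
\begin{align*}
\frac{d}{dt}\hat W^N_{W_0}(t)=N^{3/2}\dot W^N_{W_0}(N^{3/2}t)=N^3\,[\Delta_N^{-1}\hat W^N_{W_0}(t),\hat W^N_{W_0}(t)].
\end{align*}
Using stationarity once more, $\mathbb{E}^{\mu_N}\|\frac{d}{dt}\hat W^N_{W_0}(t)\|^2_{H^{-\kappa}}$ is constant in $t$ and equals $N^6\,\mathbb{E}^{\mu_N}\|[\Delta_N^{-1}W,W]\|^2_{H^{-\kappa}(\su(N))}$, so it suffices to bound this static expectation uniformly in $N$ for $\kappa$ large.

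To estimate it, expand $W=\sum_{\ell,m}W_{\ell,m}T^N_{\ell,m}$ and introduce the structure constants via $[T^N_{\ell,m},T^N_{\ell',m'}]=\sum_{\ell'',m''}C^{\ell'',m''}_{(\ell,m)(\ell',m')}T^N_{\ell'',m''}$. The $(\ell'',m'')$-coefficient of $B\coloneqq[\Delta_N^{-1}W,W]$ is then a quadratic form in the Gaussian coefficients $(W_{\ell,m})$, whose covariance is the identity by the Lemma establishing $Q_N=\mathrm{Id}$. Writing
\begin{align*}
\|B\|^2_{H^{-\kappa}(\su(N))}=\sum_{\ell'',m''}(\ell''(\ell''+1))^{-\kappa}|B_{\ell'',m''}|^2,
\end{align*}
and applying Wick's formula (taking into account the reality constraints relating $W_{\ell,m}$ and $W_{\ell,-m}$ on $\su(N)$) reduces the expectation to a weighted sum of squares of structure constants,
\begin{align*}
\mathbb{E}^{\mu_N}\|B\|^2_{H^{-\kappa}(\su(N))}\lesssim\sum_{\ell''}(\ell''(\ell''+1))^{-\kappa}\sum_{\ell,\ell'}\frac{1}{(\ell(\ell+1))^2}\sum_{m,m',m''}|C^{\ell'',m''}_{(\ell,m)(\ell',m')}|^2,
\end{align*}
the remaining contractions contributing analogous terms.

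The crux, and the main obstacle, is to show that after multiplication by $N^6$ this series is bounded uniformly in $N$ once $\kappa$ is large enough. This is precisely where the explicit estimates on the structure constants of $\su(N)$ for $\mathbb{S}^2$ (Appendix A) enter: the naive replacement $N^{3/2}[\cdot,\cdot]\approx\{\cdot,\cdot\}$ holds only for fixed modes and badly overestimates the contribution of modes $\ell,\ell'$ comparable to the cutoff $N$, so one genuinely needs the sharp magnitude and support bounds on $\sum_{m,m',m''}|C^{\ell'',m''}_{(\ell,m)(\ell',m')}|^2$ (including that these vanish unless $|\ell-\ell'|\le\ell''\le\ell+\ell'$ and $\ell''\le N-1$) to beat the $N^6$ prefactor. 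Granting those bounds, the weight $(\ell''(\ell''+1))^{-\kappa}$ with $\kappa$ large makes the $\ell''$-sum converge while $(\ell(\ell+1))^{-2}$ controls the $\ell$-sum, and uniformity in $N$ follows. Finally, the estimate for $\omega^N_{\omega_0}=j_N\hat W^N_{W_0}$ requires no extra work: by \eqref{eq:sobolev_norm} each $j_N$ is an isometry $H^s(\su(N))\to H^s(\mathbb{S}^2)$, it is time-independent and hence commutes with $\frac{d}{dt}$, and $\nu_N=(j_N)_*\mu_N$, so the two displays are related by the change of variables $\omega=j_N W$ with identical constants.
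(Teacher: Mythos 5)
Your invariance argument (isospectrality conserves $\|W\|_{\su(N)}^2$, and the field $W\mapsto[A(W),W]$ with $A=N^{3/2}\Delta_N^{-1}$ is divergence-free since $\Tr(\ad_{A(W)})=0$ and $\Tr(\ad_W\circ\Delta_N^{-1})=0$) is correct and in fact more self-contained than the paper, which only invokes non-explosion results from \cite{Cr1983} and takes the stationary solution as given; the first bound and the transfer to $\omega^N_{\omega_0}$ via the isometry \eqref{eq:sobolev_norm} likewise match the paper's proof. The first genuine gap is the prefactor in the derivative estimate. Reading \eqref{eq:euler} literally, you arrive at $\frac{d}{dt}\hat{W}^N=N^{3}[\Delta_N^{-1}\hat{W}^N,\hat{W}^N]$ and propose to ``beat the $N^6$ prefactor'' with structure-constant estimates. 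That cannot be done: by the correspondence $N^{3/2}[\Pi_N f,\Pi_N g]\approx\Pi_N\{f,g\}$, the quantity $N^{3}\,\mathbb{E}^{\mu_N}\bigl[\|[\Delta_N^{-1}W,W]\|^2_{H^{-\kappa}(\su(N))}\bigr]$ is comparable to $\mathbb{E}^{\nu_N}\bigl[\|\{\psi,\omega\}\|^2_{H^{-\kappa}(\mathbb{S}^2)}\bigr]$, which stays bounded away from zero, so with your normalization the second moment of $\frac{d}{dt}\hat{W}^N$ grows like $N^3$ and the claimed uniform bound is false. The paper's proof works with $\dot{\hat{W}}^N=N^{3/2}[P^N,\hat{W}^N]$ --- the time change is precisely what turns the plain commutator into the quantized bracket $[\cdot,\cdot]_N$ --- so the squared prefactor is $N^3$, not $N^6$; this is the only normalization under which the corollary can hold, and the inconsistency should have been flagged rather than absorbed into the structure-constant estimates.

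Second, and independently of the normalization, your reduction to ``a weighted sum of squares of structure constants'' tamed by $(\underline{\ell}(\underline{\ell}+1))^{-\kappa}$ and $(\ell(\ell+1))^{-2}$ misses the cancellations without which the sum diverges for every $\kappa$. Bounding each Wick contraction in absolute value and using $|C^{\underline{\ell}\,\underline{m}}_{\ell m,\ell' m'}|\lesssim \ell\,\underline{\ell}$ (Lemma~\ref{lem:str_const_SH}), the sum over $m$ (order $\ell$ admissible values) and over $\ell'$ (order $\underline{\ell}$ values in the triangle range) already yields $\sum_{\ell\le N}\underline{\ell}^{3}/\ell\sim\underline{\ell}^{3}\log N$: logarithmically divergent in the cutoff, uniformly in $\kappa$, because the $\kappa$-weight only controls $\underline{\ell}$, not the $\ell$-sum up to $N$. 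The paper's proof is organized around exactly the cancellations you omit: (i) it splits $N^{3/2}[\Pi_N\psi^N,\Pi_N\omega^N]=\Pi_N\{\psi^N,\omega^N\}+r^N$ and quotes the classical white-noise bound for $\mathbb{E}\bigl[\|\{-(-\Delta)^{-1}\omega_0,\omega_0\}\|^2_{H^{-\kappa}}\bigr]$, itself finite only thanks to an Albeverio--Cruzeiro-type cancellation; (ii) for $r^N$ it annihilates the paired Wick contraction via the 3j-symbol identity $\sum_m(-1)^m\left(\begin{smallmatrix}\ell&\ell&\underline{\ell}\\ m&-m&0\end{smallmatrix}\right)=\sqrt{2\ell+1}\,\delta^{\underline{\ell}}_0$; and (iii) it combines the two remaining contractions using the antisymmetry $c^{N,\ell',\ell,m',m}_{\underline{\ell},\underline{m}}=-c^{N,\ell,\ell',m,m'}_{\underline{\ell},\underline{m}}$ to symmetrize the weight $\frac{1}{\ell(\ell+1)}\bigl(\frac{1}{\ell(\ell+1)}-\frac{1}{\ell'(\ell'+1)}\bigr)$ into $\frac12\bigl(\frac{1}{\ell(\ell+1)}-\frac{1}{\ell'(\ell'+1)}\bigr)^2$, gaining the decisive extra decay; only then do the difference estimates of Proposition~\ref{prop:str_const_diff} and the regime split $\ell\gg\underline{\ell}$ versus $\ell\approx\underline{\ell}$ close the bound, for $\kappa>7/2$. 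Your phrase ``the remaining contractions contributing analogous terms'' is precisely where the argument fails: those contractions do not contribute analogous terms, they cancel against one another, and no absolute-value bound on $\sum_{m,m',m''}|C^{\ell'',m''}_{(\ell,m)(\ell',m')}|^2$, however sharp, can substitute for that.
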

\begin{proof}
First of all, notice that there exists a unique stationary solution to \eqref{eq:euler} by a suitable adaptation of non-explosion results in \cite[Section 3]{Cr1983}.
The dynamics of $\hat{W}^N_{W_0}$ is given by
\begin{align*}
\dot{\hat{W}}^N_{W_0} = N^{3/2} [P^N,\hat{W}^N_{W_0}],
\qquad \Delta_N P^N = \hat{W}^N_{W_0}.
\end{align*}
Let us introduce the streamfunction $\psi^N \coloneqq -(-\Delta)^{-1} \omega^N_{\omega_0}$.
It holds
\begin{align*}
\Pi_N \psi^N 
=
- \Pi_N (-\Delta)^{-1} \omega^N_{\omega_0}
=
- (-\Delta_N)^{-1} \Pi_N \omega^N_{\omega_0}
=
- (-\Delta_N)^{-1} \hat{W}^N_{W_0}
=
P^N,
\end{align*}
and therefore the dynamics of $\omega^N_{\omega_0}$ is given by
\begin{align} \label{eq:omega_dyn}
\dot{\omega}^N_{\omega_0}
&=
j_N \dot{\hat{W}}^N_{W_0}
=
j_N N^{3/2} [\Pi_N \psi^N,\Pi_N \omega^N_{\omega_0}]
=
j_N \Pi_N \{ \psi^N, \omega^N_{\omega_0} \} + j_N r^N,
\end{align}
with $r^N:\Omega_N \times [0,T] \to \su(N)$ given by 
\begin{align*}
r^N 
&= 
N^{3/2} [\Pi_N \psi^N,\Pi_N \omega^N_{\omega_0}] - \Pi_N \{ \psi^N, \omega^N_{\omega_0} \}.
\end{align*}
Writing $\omega^N_{\omega_0} \eqqcolon \sum_{\substack{\ell = 1,\dots,N-1,\\|m| \leq \ell}} \hat{\omega}_{\ell,m} Y_{\ell,m}$, by the previous formula we deduce
\begin{align} \label{eq:rN}
r^N 
&=
\sum_{\substack{\ell,\ell' = 1,\dots,N-1,\\|m| \leq \ell, |m'| \leq \ell'}}
\frac{\hat{\omega}_{\ell,m} \hat{\omega}_{\ell',m'}}{\ell(\ell+1)}
\left(
- N^{3/2} [ T^N_{\ell,m} , T^N_{\ell',m'} ] 
+ \Pi_N \{ Y_{\ell,m} , Y_{\ell',m'} \} 
\right)
\\
&\eqqcolon \nonumber
\sum_{\substack{\ell,\ell' = 1,\dots,N-1,\\|m| \leq \ell, |m'| \leq \ell'}}
\frac{\hat{\omega}_{\ell,m} \hat{\omega}_{\ell',m'}}{\ell(\ell+1)}
c^N_{\ell,\ell',m,m'}
\end{align}
with $\lim_{N \to \infty} \| c^N_{\ell,\ell',m,m'} \|_{\su(N)} = 0$ for every fixed $\ell,\ell',m,m'$ by the properties of $\Pi_N$.

Having said that, by \eqref{eq:sobolev_norm} and change of variables
\begin{gather*}
\mathbb{E}^{\mu_N}
\left[
\int_0^T
\| \hat{W}^N_{W_0}(t) \|^p_{H^{-1-\epsilon}(\su(N))} dt
+
\int_0^T
\| \frac{d}{dt} \hat{W}^N_{W_0}(t) \|^2_{H^{-\kappa}(\su(N))} dt
\right]
\\
=
\mathbb{E}^{\nu_N}
\left[
\int_0^T
\| \omega^N_{\omega_0}(t) \|^p_{H^{-1-\epsilon}(\mathbb{S}^2)} dt
+
\int_0^T
\| \frac{d}{dt} \omega^N_{\omega_0}(t) \|^2_{H^{-\kappa}(\mathbb{S}^2)} dt
\right].
\end{gather*}

Let us consider the two terms separately. The first one is easy to control, indeed
\begin{align*}
\mathbb{E}^{\nu_N}
\left[
\int_0^T
\| \omega^N_{\omega_0}(t) \|^p_{H^{-1-\epsilon}(\mathbb{S}^2)} dt
\right]
&=
\mathbb{E}^{\nu_N}
\left[
\int_0^T
\| \omega_0 \|^p_{H^{-1-\epsilon}(\mathbb{S}^2)} dt
\right]
= T C_{\epsilon,p};
\end{align*}
as for the second one, since $r^N$ is stationary as well
\begin{align*}
\mathbb{E}^{\nu_N}
&\left[
\int_0^T
\| \frac{d}{dt} \omega^N_{\omega_0}(t) \|^2_{H^{-\kappa}(\mathbb{S}^2)} dt
\right]
\\
&\qquad
\leq 
2\mathbb{E}^{\nu_N}
\left[
\int_0^T
\| \{ \psi^N(t), \omega^N_{\omega_0}(t) \} \|^2_{H^{-\kappa}(\mathbb{S}^2)} dt
+
\int_0^T
\| j_N r^N(t) \|^2_{H^{-\kappa}(\mathbb{S}^2)} dt
\right]
\\
&\qquad
= 
2T \mathbb{E}^{\nu_N}
\left[
\| \{ -(-\Delta)^{-1} \omega_0, \omega_0 \} \|^2_{H^{-\kappa}(\mathbb{S}^2)}
+
\| j_N r^N(0) \|^2_{H^{-\kappa}(\mathbb{S}^2)}
\right]
\\
&\qquad
\leq 
T C_{\epsilon,\kappa} + T 
\mathbb{E}^{\mu_N}
\left[
\| r^N(0) \|^2_{H^{-\kappa}(\su(N))}
\right].
\end{align*}

Writing 
\begin{align*}
c^N_{\ell,\ell',m,m'} 
\eqqcolon 
\sum_{\substack{\underline{\ell} = 1,\dots,N-1,\\|\underline{m}| \leq \underline{\ell}}} 
c^{N,\ell,\ell',m,m'}_{\underline{\ell},\underline{m}} T^N_{\underline{\ell},\underline{m}},
\end{align*}
we get
\begin{align*}
&\mathbb{E}^{\mu_N}
\left[
\| r^N(0) \|^2_{H^{-\kappa}(\su(N))}
\right]
\\
&\quad
=
\mathbb{E}^{\mu_N} 
\left[
\sum_{\substack{\underline{\ell} = 1,\dots,N-1,\\|\underline{m}| \leq \underline{\ell}}} 
(\underline{\ell}(\underline{\ell}+1))^{-\kappa}
\left|
\sum_{\substack{\ell,\ell' = 1,\dots,N-1,\\|m| \leq \ell, |m'| \leq \ell'}}
\frac{\hat{\omega}_{\ell,m} \hat{\omega}_{\ell',m'}}{\ell(\ell+1)}
c^{N,\ell,\ell',m,m'}_{\underline{\ell},\underline{m}}
\right|^2
\right]
\\
&\quad
=
\mathbb{E}^{\mu_N} 
\left[
\sum_{\underline{\ell},\underline{m}} 
(\underline{\ell}(\underline{\ell}+1))^{-\kappa}
\sum_{\substack{\ell,\ell',h,h',\\m,m',n,n'}}
\frac{\hat{\omega}_{\ell,m} \hat{\omega}_{\ell',m'}}{\ell(\ell+1)}
\frac{\overline{\hat{\omega}_{h,n}} 
\overline{\hat{\omega}_{h',n'}}}{h(h+1)}
c^{N,\ell,\ell',m,m'}_{\underline{\ell},\underline{m}}
\overline{c^{N,h,h',n,n'}_{\underline{\ell},\underline{m}}}
\right]
\\
&\quad
=
\sum_{\underline{\ell},\underline{m}} 
(\underline{\ell}(\underline{\ell}+1))^{-\kappa}
\sum_{\substack{\ell,\ell',h,h',\\m,m',n,n'}}
\frac{\mathbb{E}^{\mu_N} 
\left[\hat{\omega}_{\ell,m} \hat{\omega}_{\ell',m'} \overline{\hat{\omega}_{h,n}} 
\overline{\hat{\omega}_{h',n'}} \right]}
{\ell(\ell+1)h(h+1)}
c^{N,\ell,\ell',m,m'}_{\underline{\ell},\underline{m}}
\overline{c^{N,h,h',n,n'}_{\underline{\ell},\underline{m}}}.
\end{align*}
It holds that $\overline{\hat{\omega}}_{lm}=(-1)^m\hat{\omega}_{l-m}$.
Hence, by the Isserlis-Wick formula
\begin{align*}
\mathbb{E}^{\mu_N} 
\left[\hat{\omega}_{\ell,m} \hat{\omega}_{\ell',m'} \overline{\hat{\omega}_{h,n}} 
\overline{\hat{\omega}_{h',n'}} \right]
&=
(-1)^m(-1)^n\delta_{\ell,\ell'} \delta_{m,-m'} 
\delta_{h,h'} \delta_{n,-n'}
\\
&\quad
+
\delta_{\ell,h} \delta_{m,n} 
\delta_{\ell',h'} \delta_{m',n'}
\\
&\quad
+
\delta_{\ell,h'} \delta_{m,n'} 
\delta_{\ell',h} \delta_{m',n},
\end{align*}
and therefore
\begin{align*}
\mathbb{E}^{\mu_N}
\left[
\| r^N(0) \|^2_{H^{-\kappa}(\su(N))}
\right]
&=
\sum_{\underline{\ell},\underline{m}} 
(\underline{\ell}(\underline{\ell}+1))^{-\kappa}
\sum_{\substack{\ell,h,m,n}}
\frac{(-1)^m(-1)^n c^{N,\ell,\ell,m,-m}_{\underline{\ell},\underline{m}}
\overline{c^{N,h,h,n,-n}_{\underline{\ell},\underline{m}}}}
{\ell(\ell+1)h(h+1)}
\\
&\quad
+
\sum_{\underline{\ell},\underline{m}} 
(\underline{\ell}(\underline{\ell}+1))^{-\kappa}
\sum_{\substack{\ell,\ell',m,m'}}
\frac{c^{N,\ell,\ell',m,m'}_{\underline{\ell},\underline{m}}
\overline{c^{N,\ell,\ell',m,m'}_{\underline{\ell},\underline{m}}}}
{\ell^2(\ell+1)^2}
\\
&\quad
+
\sum_{\underline{\ell},\underline{m}} 
(\underline{\ell}(\underline{\ell}+1))^{-\kappa}
\sum_{\substack{\ell,\ell',m,m'}}
\frac{c^{N,\ell,\ell',m,m'}_{\underline{\ell},\underline{m}}
\overline{c^{N,\ell',\ell,m',m}_{\underline{\ell},\underline{m}}}}
{\ell(\ell+1)\ell'(\ell'+1)}
\\
&=
\sum_{\underline{\ell},0} 
(\underline{\ell}(\underline{\ell}+1))^{-\kappa}
\left|\sum_{\substack{\ell,m}}
\frac{(-1)^m c^{N,\ell,\ell,m,-m}_{\underline{\ell},0}
}
{\ell(\ell+1)} \right|^2
\\
&\quad
+
\sum_{\underline{\ell},\underline{m}} 
(\underline{\ell}(\underline{\ell}+1))^{-\kappa}
\sum_{\substack{\ell,\ell',m,\underline{m}-m}}
\frac{\left|c^{N,\ell,\ell',m,\underline{m}-m}_{\underline{\ell},\underline{m}}\right|^2}
{\ell(\ell+1)}
\left(
\frac{1}{\ell(\ell+1)} - \frac{1}{\ell'(\ell'+1)}
\right).
\end{align*}
Where we have used the fact that $\underline{m}=m+m'$.
We have the following equality of the $3j-$symbols (cfr. \autoref{sec:AppA})\footnote{This can be directly derived from the relation of the $3j-$symbols with the Clebsch-Gordan coefficients and the definition of the latter.}:
\begin{equation}
\sum_m(-1)^m\left( \begin{array}{ccc}
\ell & \ell & \underline{\ell} \\
m & -m & 0 
\end{array} \right)=\sqrt{2l+1}\delta^{\underline{\ell}}_0.
\end{equation}
Hence, the first term on the right hand side vanishes.
Therefore, we have:
\begin{equation}
\begin{array}{ll}
&\mathbb{E}^{\mu_N}
\left[
\| r^N(0) \|^2_{H^{-\kappa}(\su(N))}\right]
=
\sum_{\underline{\ell},\underline{m}} 
(\underline{\ell}(\underline{\ell}+1))^{-\kappa}
\sum_{\substack{\ell,\ell',m,\underline{m}-m}}
\frac{\left|c^{N,\ell,\ell',m,\underline{m}-m}_{\underline{\ell},\underline{m}}\right|^2}
{\ell(\ell+1)}
\left(
\frac{1}{\ell(\ell+1)} - \frac{1}{\ell'(\ell'+1)}
\right)\\
&=
\sum_{\underline{\ell},\underline{m}} 
(\underline{\ell}(\underline{\ell}+1))^{-\kappa}
\sum_{\substack{\ell,\ell',m,\underline{m}-m}}
\frac{\left|c^{N,\ell,\ell',m,\underline{m}-m}_{\underline{\ell},\underline{m}}\right|^2}
{2}
\left(
\frac{1}{\ell(\ell+1)} - \frac{1}{\ell'(\ell'+1)}
\right)^2\\
&=\sum_{\underline{\ell}=1}^{N-1}\sum_{\underline{m}=-\underline{\ell}}^{\underline{\ell}}
(\underline{\ell}(\underline{\ell}+1))^{-\kappa}
\sum_{\ell=1}^{N-1}\sum_{m=-\ell}^{\ell}\sum_{\ell'=|\underline{\ell}-\ell|+1}^{\min\lbrace N,\underline{\ell}+\ell\rbrace}
\frac{\left|c^{N,\ell,\ell',m,\underline{m}-m}_{\underline{\ell},\underline{m}}\right|^2}
{2}
\left(
\frac{1}{\ell(\ell+1)} - \frac{1}{\ell'(\ell'+1)}
\right)^2.
\end{array}
\end{equation}
We split the sum in two parts.
We say that $\ell \gg \underline{\ell}$ if $\ell \geq 2\underline{\ell} (\log(\underline{\ell})+1)$, and $\ell \approx \underline{\ell}$ if $\ell \leq 2\underline{\ell}(\log(\underline{\ell})+1)$.
Then, for $\ell \gg \underline{\ell}$ it holds $\ell' \geq |\underline{\ell}-\ell|+1 = \ell - \underline{\ell} + 1 \geq \ell/3$ and:
\begin{align*}
\left|\frac{1}{\ell(\ell+1)} - \frac{1}{\ell'(\ell'+1)}\right|^2
&=
\left|\frac{\ell'(\ell'+1)-\ell(\ell+1)}{\ell(\ell+1)\ell'(\ell'+1)}\right|^2
\\
&\leq C
\frac{\left|\ell'(\ell'+1)-\ell(\ell+1)\right|^2}{\ell^4(\ell+1)^4},
\end{align*}
where $C$ denotes, from now, on a positive suitable constant.
The numerator also satisfies:
\begin{align*}
\left|\ell'(\ell'+1)-\ell(\ell+1)\right|^2
&\leq
\max \left\{
\left|(\ell+\underline{\ell})(\ell+\underline{\ell}+1)-\ell(\ell+1)\right|^2 ,
\left|\ell(\ell+1) - (\ell-\underline{\ell}+1)(\ell-\underline{\ell}+2)\right|^2
\right\}.
\end{align*}
\begin{align*}
\left|(\ell+\underline{\ell})(\ell+\underline{\ell}+1)-\ell(\ell+1)\right|^2
&\leq
\left|
2\ell\underline{\ell}+\underline{\ell}^2+\underline{\ell}\right|^2
\leq
C \ell^2 \underline{\ell}^2 + C \underline{\ell}^4.
\end{align*}
\begin{align*}
\left|(\ell-\underline{\ell}+1)(\ell-\underline{\ell}+2)-\ell(\ell+1)\right|^2
&\leq
...
\leq
C \ell^2 \underline{\ell}^2 + C \underline{\ell}^4.
\end{align*}
Hence, for $\ell\gg\underline{\ell}$:
\begin{align*}
\left|\frac{1}{\ell(\ell+1)} - \frac{1}{\ell'(\ell'+1)}\right|^2
\leq C
\frac{\ell^2 \underline{\ell}^2 + \underline{\ell}^4}{\ell^4(\ell+1)^4}.
\end{align*}
By the Proposition~\ref{prop:str_const_diff} in \autoref{sec:AppA}, we get:
\begin{align*}
&\sum_{\underline{\ell}=1}^{N-1}\sum_{\underline{m}=-\underline{\ell}}^{\underline{\ell}}
(\underline{\ell}(\underline{\ell}+1))^{-\kappa}\sum_{\ell\gg\underline{\ell}}\sum_{m=-\ell}^{\ell}\sum_{\ell'=|\underline{\ell}-\ell|+1}^{\min\lbrace N,\underline{\ell}+\ell\rbrace}
\frac{\left|c^{N,\ell,\ell',m,\underline{m}-m}_{\underline{\ell},\underline{m}}\right|^2}
{2}
\frac{\ell^2 \underline{\ell}^2 + \underline{\ell}^4}{\ell^4(\ell+1)^4}
\\
&\leq 
\frac{C}{N^4}\sum_{\underline{\ell}=1}^{N-1}\sum_{\underline{m}=-\underline{\ell}}^{\underline{\ell}}
(\underline{\ell}(\underline{\ell}+1))^{-\kappa}\sum_{\ell\gg\underline{\ell}}\sum_{m=-\ell}^{\ell}\sum_{\ell'=|\underline{\ell}-\ell|+1}^{\min\lbrace N,\underline{\ell}+\ell\rbrace}
\frac{\ell^8 \underline{\ell}^4 + \ell^6\underline{\ell}^6}{\ell^4(\ell+1)^4}\\
&\leq 
\frac{C}{N^4}\sum_{\underline{\ell}=1}^{N-1}\sum_{\underline{m}=-\underline{\ell}}^{\underline{\ell}}
(\underline{\ell}(\underline{\ell}+1))^{-\kappa}\sum_{\ell\gg\underline{\ell}}\sum_{m=-\ell}^{\ell}
\underline{\ell}^5+\frac{\underline{\ell}^7}{\ell^2}\\
&\leq 
C\sum_{\underline{\ell}=1}^{N-1}\sum_{\underline{m}=-\underline{\ell}}^{\underline{\ell}}
(\underline{\ell}(\underline{\ell}+1))^{-\kappa}\left(\frac{\underline{\ell}^5}{N^2} + \frac{\underline{\ell}^7\log(N)}{N^4}\right)\\
&\leq 
C\left(\frac{N^{7-2\kappa}}{N^2} + \frac{N^{9-2\kappa}\log(N)}{N^4}\right)\\
&=CN^{5-2\kappa}\log(N),
\end{align*}
which goes to $0$ for $N\rightarrow\infty$ for $\kappa>5/2$.

For $\ell\approx\underline{\ell}$, $\ell'$ can be as small as $1$. 
Hence:
\begin{align*}
\left|\frac{1}{\ell(\ell+1)} - \frac{1}{\ell'(\ell'+1)}\right|^2
\leq C
\frac{\ell^2 \underline{\ell}^2 + \underline{\ell}^4}{\ell^2(\ell+1)^2}
\leq C
\frac{\underline{\ell}^4}{\underline{\ell}^2(\underline{\ell}+1)^2}
\leq C.
\end{align*}
By the Proposition~\ref{prop:str_const_diff} in \autoref{sec:AppA}, we get:
\begin{align*}
&\sum_{\underline{\ell}=1}^{N-1}\sum_{\underline{m}=-\underline{\ell}}^{\underline{\ell}}
(\underline{\ell}(\underline{\ell}+1))^{-\kappa}\sum_{\ell\approx\underline{\ell}}\sum_{m=-\ell}^{\ell}\sum_{\ell'=|\underline{\ell}-\ell|+1}^{\min\lbrace N,\underline{\ell}+\ell\rbrace}
\frac{\left|c^{N,\ell,\ell',m,\underline{m}-m}_{\underline{\ell},\underline{m}}\right|^2}
{2}
\left(\frac{1}{\ell(\ell+1)} - \frac{1}{\ell'(\ell'+1)}\right)^2
\\
&\leq 
\frac{C}{N^4}\sum_{\underline{\ell}=1}^{N-1}\sum_{\underline{m}=-\underline{\ell}}^{\underline{\ell}}
(\underline{\ell}(\underline{\ell}+1))^{-\kappa}\sum_{\ell\approx\underline{\ell}}\sum_{m=-\ell}^{\ell}\sum_{\ell'=|\underline{\ell}-\ell|+1}^{\min\lbrace N,\underline{\ell}+\ell\rbrace}
\underline{\ell}^6\ell^2\\
&\leq 
\frac{C}{N^4}\sum_{\underline{\ell}=1}^{N-1}\sum_{\underline{m}=-\underline{\ell}}^{\underline{\ell}}
(\underline{\ell}(\underline{\ell}+1))^{-\kappa}\sum_{\ell\approx\underline{\ell}}\sum_{m=-\ell}^{\ell}
\underline{\ell}^6\ell^3\\
&\leq 
\frac{C}{N^4}\sum_{\underline{\ell}=1}^{N-1}\sum_{\underline{m}=-\underline{\ell}}^{\underline{\ell}}
(\underline{\ell}(\underline{\ell}+1))^{-\kappa}\underline{\ell}^{11}\log^5(\underline{\ell})\\
&\leq 
C\frac{N^{11-2\kappa}\log^5(N)}{N^4}\\
&=CN^{7-2\kappa}\log^5(N),
\end{align*}
which goes to $0$ for $N\rightarrow\infty$ for $\kappa>7/2$.
\end{proof}

\section{Identification of the limit} \label{sec:limit}

\begin{proposition} \label{prop:conv}
Fix $\epsilon>0$. 
There exist a subsequence $(N_m)_{m \in \mathbb{N}}$, a common probability space $(\tilde{\Omega},\tilde{F},\tilde{\PP})$ and random variables $\tilde{\omega}^m, \tilde{\omega} : \tilde{\Omega} \to C([0,T],H^{-1-\epsilon}(\mathbb{S}^2))$, $m \in \N$ such that $\tilde{\omega}^m \sim \omega^{N_m}_{\omega_0}$ for every $m \in \N$ and $\tilde{\omega}^m \to \tilde{\omega}$ almost surely with respect to $\tilde{\PP}$.
\end{proposition}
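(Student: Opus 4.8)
The plan is to prove this by a tightness-plus-Skorokhod argument for the laws of $\omega^N_{\omega_0}$ on the Polish space $C([0,T];H^{-1-\epsilon}(\mathbb{S}^2))$. The two a priori estimates already established — the uniform bound on the $H^{-1-\epsilon'}$-moments of the marginals (preceding Lemma) and the $L^2$-in-time bound on the time derivative in the weak space $H^{-\kappa}$ (\autoref{cor:bounds}) — are exactly the ingredients needed, once they are reorganized through stationarity and a space interpolation. First I would fix $\epsilon'\in(0,\epsilon)$ and $\kappa$ large enough that \autoref{cor:bounds} applies and $\kappa>1+\epsilon$.

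The first step is fixed-time marginal tightness in $H^{-1-\epsilon}(\mathbb{S}^2)$. By stationarity the law of $\omega^N_{\omega_0}(t)$ equals $\nu_N$ for every $t$, so the moment Lemma gives $\sup_N \mathbb{E}\|\omega^N_{\omega_0}(t)\|^p_{H^{-1-\epsilon'}(\mathbb{S}^2)}\leq C$ uniformly in $t$ and in $N$, for every $p$. Since $\mathbb{S}^2$ is compact, the embedding $H^{-1-\epsilon'}(\mathbb{S}^2)\hookrightarrow H^{-1-\epsilon}(\mathbb{S}^2)$ is compact, and Markov's inequality then makes the family of marginal laws $\{\mathcal{L}(\omega^N_{\omega_0}(t))\}_N$ tight in $H^{-1-\epsilon}(\mathbb{S}^2)$, uniformly in $t$.

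The crux is the second step: a uniform Kolmogorov increment bound \emph{directly} in $H^{-1-\epsilon}$, not merely in the weak space $H^{-\kappa}$. From $\omega^N(t)-\omega^N(s)=\int_s^t\dot\omega^N\,dr$ together with stationarity of $\dot\omega^N$, Cauchy--Schwarz and \autoref{cor:bounds} give $\mathbb{E}\|\omega^N(t)-\omega^N(s)\|^2_{H^{-\kappa}(\mathbb{S}^2)}\leq C|t-s|^2$. To transfer this to the true space I would interpolate: writing $-1-\epsilon=(1-\theta)(-\kappa)+\theta(-1-\epsilon')$ with $\theta=\tfrac{\kappa-1-\epsilon}{\kappa-1-\epsilon'}\in(0,1)$, one has
\[
\|\omega^N(t)-\omega^N(s)\|_{H^{-1-\epsilon}}\leq \|\omega^N(t)-\omega^N(s)\|_{H^{-\kappa}}^{1-\theta}\,\|\omega^N(t)-\omega^N(s)\|_{H^{-1-\epsilon'}}^{\theta}.
\]
Bounding the first factor by $(|t-s|^{1/2}G)^{1-\theta}$ with $G=\big(\int_0^T\|\dot\omega^N\|^2_{H^{-\kappa}}dr\big)^{1/2}$, the second by $(\|\omega^N(t)\|_{H^{-1-\epsilon'}}+\|\omega^N(s)\|_{H^{-1-\epsilon'}})^{\theta}$, and taking expectations with Hölder, I expect to reach $\mathbb{E}\|\omega^N(t)-\omega^N(s)\|^\alpha_{H^{-1-\epsilon}}\leq C|t-s|^{\alpha(1-\theta)/2}$, so that any $\alpha>2/(1-\theta)$ yields a time-exponent strictly above $1$, i.e. a genuine Kolmogorov--Chentsov estimate uniform in $N$. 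This is exactly the delicate point, since an $L^p$-in-time strong bound combined with weak-space equicontinuity is \emph{not} enough to obtain compactness in $C([0,T];H^{-1-\epsilon})$; what saves the argument is stationarity (which upgrades the time-averaged strong bound to a uniform-in-$t$ marginal bound) together with the hypercontractive moment control. Concretely, the large-$\alpha$ scheme requires the moments of $\|\omega^N\|_{H^{-1-\epsilon'}}$ and of $G$ to be finite and uniform in $N$: the former is the moment Lemma for all $p$, while the latter needs the $L^2$ derivative bound of \autoref{cor:bounds} upgraded to every order, which holds because $\dot\omega^N$ is a fixed-degree (quadratic) polynomial of the Gaussian field, so Gaussian hypercontractivity bounds all its moments by the second one, uniformly in $N$.

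The final step is then routine. Marginal tightness together with the uniform increment estimate give, by the standard Kolmogorov tightness criterion on $C([0,T];E)$ with $E$ a separable Banach space, tightness of $\{\mathcal{L}(\omega^N_{\omega_0})\}_N$ on $C([0,T];H^{-1-\epsilon}(\mathbb{S}^2))$. This space being Polish, the Skorokhod representation theorem furnishes a subsequence $(N_m)_{m\in\N}$, a common probability space $(\tilde\Omega,\tilde F,\tilde\PP)$ and random variables $\tilde\omega^m\sim\omega^{N_m}_{\omega_0}$ and $\tilde\omega$ with $\tilde\omega^m\to\tilde\omega$ almost surely in $C([0,T];H^{-1-\epsilon}(\mathbb{S}^2))$, which is precisely the assertion. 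The main obstacle remains the interpolated increment bound of the third paragraph, and in particular the hypercontractive upgrade of the derivative estimate, without which one only recovers tightness in the weaker space $C([0,T];H^{-\kappa}(\mathbb{S}^2))$.
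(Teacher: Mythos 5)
Your proposal is correct, but it reaches tightness by a genuinely different route than the paper. The paper's own proof is two sentences: it deduces tightness of the laws of $\omega^{N}_{\omega_0}$ on $C([0,T],H^{-1-\epsilon}(\mathbb{S}^2))$ directly from \autoref{cor:bounds} by invoking Simon's compactness criterion \cite[Corollary 9]{Si1986} together with Prokhorov's theorem, and then concludes, exactly as you do, with the Skorokhod representation theorem. You instead build tightness probabilistically: stationarity upgrades the time-averaged moment bounds to uniform-in-time marginal bounds (whence marginal tightness through the compact embedding $H^{-1-\epsilon'}(\mathbb{S}^2)\hookrightarrow H^{-1-\epsilon}(\mathbb{S}^2)$), and you manufacture a Kolmogorov--Chentsov increment estimate in the \emph{target} space $H^{-1-\epsilon}$ by interpolating between the $H^{-\kappa}$ equicontinuity supplied by the time-derivative bound and the $H^{-1-\epsilon'}$ marginal moments; your exponent $\theta=(\kappa-1-\epsilon)/(\kappa-1-\epsilon')$ is the right one, and the hypercontractivity step is legitimate, since at each fixed time $\dot\omega^N(t)$ is a quadratic polynomial of the Gaussian-distributed field $\omega^N(t)$ (both $\{\psi^N,\omega^N\}$ and the remainder $r^N$ are bilinear), so all moments of $\|\dot\omega^N(t)\|_{H^{-\kappa}}$ are controlled by the second one uniformly in $N$ and $t$. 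What each approach buys: the paper's citation is far shorter, but \cite[Corollary 9]{Si1986} as stated requires an $L^\infty$-in-time bound in the strong space, whereas \autoref{cor:bounds} provides only $L^p$-in-time bounds, so strictly speaking the same stationarity upgrade you perform explicitly is implicitly needed there too; your argument makes that step rigorous and self-contained, at the cost of the hypercontractive detour and of invoking the (standard) Kolmogorov tightness criterion for $C([0,T];E)$-valued processes, and from tightness onward the two proofs coincide.
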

\begin{proof}
Convergence in law up to a subsequence follows from \autoref{cor:bounds}, exploiting Simon compactness criterion \cite[Corollary 9]{Si1986} and Prokhorov Theorem.
Almost sure convergence in an auxiliary probability space is then a consequence of Skorokhod Theorem.
\end{proof}

%With a little abuse of notation we drop the tildes hereafter. 
In the following we say that a random variable taking values in $H^{-1-\epsilon}(\mathbb{S}^2)$ is a \emph{white noise} if distributed as $\nu$.
We recall the following result from \cite{Fl2018}, here adapted in order to consider functions defined on the sphere $\mathbb{S}^2$.
\begin{proposition}\cite[Theorem 8]{Fl2018}. \label{prop:Delort-Schochet}
Let $\omega: \Omega \to H^{-1-\epsilon}(\mathbb{S}^2)$ be a white noise, and for a fixed test function $\phi \in C^\infty(\mathbb{S}^2)$ denote
\begin{align*}
H_\phi(x,y) \coloneqq \frac12 K(x-y) (\nabla \phi(x) - \nabla \phi(y)).
\end{align*}
Assume to have a sequence of symmetric functions $H^N_\phi \in H^{2+2\epsilon}(\mathbb{S}^2 \times \mathbb{S}^2)$, $N \in \N$ that approximates $H_\phi$ in the following sense:
\begin{gather}
\lim_{N \to \infty} \label{eq:condition1}
\int_{\mathbb{S}^2} \int_{\mathbb{S}^2}
|H^N_\phi - H_\phi|^2 (x,y) d\mbox{vol}_S dy = 0;
\\
\lim_{N \to \infty} \label{eq:condition2}
\int_{\mathbb{S}^2}
H^N_\phi(x,x) d\mbox{vol}_S = 0.
\end{gather}
Then the sequence of random variables $\langle \langle \omega \otimes \omega , H^N_\phi \rangle \rangle$, $N \in \N$ is a Cauchy sequence in $L^2(\Omega)$. Moreover, the limit is independent of the sequence $H^N_\phi$, that is: if $\tilde{H}^N_\phi$, $N \in \N$ is another sequence satisfying \eqref{eq:condition1} and \eqref{eq:condition2}, then 
\begin{align*}
L^2(\Omega) - \lim_{N \to \infty}
\langle \langle \omega \otimes \omega , H^N_\phi \rangle \rangle
=
L^2(\Omega) - \lim_{N \to \infty}
\langle \langle \omega \otimes \omega , \tilde{H}^N_\phi \rangle \rangle.
\end{align*}
\end{proposition}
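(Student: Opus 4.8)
The plan is to realize each random variable $\langle\langle \omega\otimes\omega, H^N_\phi\rangle\rangle$ as an element of the direct sum of the zeroth and second homogeneous Wiener chaoses generated by $\omega$, and then to reduce both assertions—the Cauchy property and the independence of the limit—to a single elementary second-moment estimate controlled by the two hypotheses \eqref{eq:condition1} and \eqref{eq:condition2}. Concretely, I would fix a real orthonormal basis $(e_k)_k$ of $L^2(\mathbb{S}^2)$ and write $\omega = \sum_k \xi_k e_k$ with $(\xi_k)_k$ i.i.d. standard Gaussians, which is exactly the meaning of ``$\omega$ distributed as $\nu$, with covariance $Id$''. For a symmetric scalar kernel $H = \sum_{j,k} H_{jk}\, e_j\otimes e_k$ one sets $\langle\langle \omega\otimes\omega, H\rangle\rangle = \sum_{j,k} H_{jk}\,\xi_j\xi_k$. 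This is well defined in $L^2(\Omega)$ for $H = H^N_\phi$: the embedding $H^{2+2\epsilon}(\mathbb{S}^2\times\mathbb{S}^2)\hookrightarrow C^0$ (valid since $2+2\epsilon$ exceeds half the dimension of the product manifold) makes $H^N_\phi$ continuous, so the diagonal $H^N_\phi(x,x)$ and its integral are defined, while square-integrability makes the second-chaos part converge.

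The crucial computation, which I would carry out via the Isserlis--Wick formula for the fourth moments of the $\xi_k$, is the pair of identities
\begin{align*}
\mathbb{E}\left[\langle\langle \omega\otimes\omega, H\rangle\rangle\right] &= \sum_k H_{kk} = \int_{\mathbb{S}^2} H(x,x)\, d\mbox{vol}_S, \\
\mathrm{Var}\left(\langle\langle \omega\otimes\omega, H\rangle\rangle\right) &= 2\sum_{j,k} H_{jk}^2 = 2\|H\|_{L^2(\mathbb{S}^2\times\mathbb{S}^2)}^2,
\end{align*}
valid for every symmetric $H$, the point being that the identity covariance makes the ``diagonal trace'' precisely the mean and $2\|H\|_{L^2}^2$ precisely the variance (here symmetry of $H$ is essential, otherwise the second line would involve $H$ and its transpose). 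Combining $\mathbb{E}[X^2]=\mathrm{Var}(X)+(\mathbb{E} X)^2$ with linearity of $\langle\langle\omega\otimes\omega,\cdot\rangle\rangle$ in the kernel, I obtain for any two admissible symmetric kernels $G_1,G_2$
\begin{align*}
\mathbb{E}\left[|\langle\langle\omega\otimes\omega, G_1 - G_2\rangle\rangle|^2\right] = 2\|G_1-G_2\|_{L^2}^2 + \left(\int_{\mathbb{S}^2}(G_1-G_2)(x,x)\,d\mbox{vol}_S\right)^2.
\end{align*}

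With this estimate both claims are immediate. For the Cauchy property I would take $G_1 = H^N_\phi$, $G_2 = H^M_\phi$: hypothesis \eqref{eq:condition1} forces $(H^N_\phi)$ to be Cauchy in $L^2$, hence $\|H^N_\phi - H^M_\phi\|_{L^2}\to 0$, while subtracting two instances of \eqref{eq:condition2} gives $\int (H^N_\phi - H^M_\phi)(x,x)\,d\mbox{vol}_S\to 0$; both terms on the right vanish as $M,N\to\infty$, so the sequence is Cauchy in $L^2(\Omega)$ and converges. For independence of the limit I would take $G_1 = H^N_\phi$, $G_2 = \tilde H^N_\phi$ and use $\|H^N_\phi - \tilde H^N_\phi\|_{L^2}\le \|H^N_\phi - H_\phi\|_{L^2} + \|H_\phi - \tilde H^N_\phi\|_{L^2}\to 0$ together with the analogous vanishing of the diagonal integrals; hence the two sequences share the same $L^2(\Omega)$ limit.

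The only genuinely delicate point—the \emph{main obstacle}—is conceptual rather than computational. Because $\omega$ lives only in $H^{-1-\epsilon}$ and the limiting kernel $H_\phi$ is merely bounded (its Biot--Savart singularity being exactly cancelled by the increment $\nabla\phi(x)-\nabla\phi(y)$, which is $O(|x-y|)$), one cannot pair $\omega\otimes\omega$ against $H_\phi$ pointwise: the limit object exists only in $L^2(\Omega)$. The two hypotheses are precisely engineered to separate the two obstructions to passing to the limit, namely \eqref{eq:condition1} controls the regular second-chaos part through the Wiener--It\^o isometry, while \eqref{eq:condition2} controls the divergent diagonal (zeroth-chaos, renormalization) part. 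I would therefore stress that no chaos of order higher than two ever appears, so $L^2(\Omega)$ control is automatic and, if desired, upgrades to every $L^p(\Omega)$, $p<\infty$, by Gaussian hypercontractivity.
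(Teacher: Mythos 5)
Your proof is correct, and it is essentially the argument behind the cited result: the paper itself offers no proof, importing the proposition (adapted to the sphere) from \cite[Theorem 8]{Fl2018}, whose proof rests on exactly the Isserlis--Wick second-moment identity you derive, $\mathbb{E}\left[|\langle\langle\omega\otimes\omega, G_1-G_2\rangle\rangle|^2\right]=2\|G_1-G_2\|^2_{L^2}+\left(\int_{\mathbb{S}^2}(G_1-G_2)(x,x)\,d\mbox{vol}_S\right)^2$, with \eqref{eq:condition1} controlling the second-chaos part and \eqref{eq:condition2} the diagonal zeroth-chaos (renormalization) part. Your Wiener-chaos framing and hypercontractivity remark are sound additions, and your appeal to the embedding $H^{2+2\epsilon}(\mathbb{S}^2\times\mathbb{S}^2)\hookrightarrow C^0$ is precisely what legitimizes the trace identity $\sum_k H_{kk}=\int_{\mathbb{S}^2}H(x,x)\,d\mbox{vol}_S$ and the pathwise duality pairing against $\omega\otimes\omega\in H^{-2-2\epsilon}$.
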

%\begin{proof}
%...
%\end{proof}

\begin{remark} \label{rmk:existence}
There exists a sequence $H^N_\phi$ satisfying \eqref{eq:condition1} and \eqref{eq:condition2}.
It can be constructed by mollification of the Biot-Savart kernel:
\begin{align*}
H^N_\phi(x,y) \coloneqq \frac12 K_{1/N}(x-y) (\nabla \phi(x) - \nabla \phi(y)),
\end{align*}
see \cite[Remark 9]{Fl2018} for details.
\end{remark}

\begin{definition} \label{def:diamond}
Let $\omega: \Omega \to H^{-1-\epsilon}(\mathbb{S}^2)$ be a white noise and take $\phi \in C^\infty(\mathbb{S}^2)$.
We define the random variable $\langle \omega \diamond \omega , H_\phi \rangle \in L^2(\Omega)$ as the $L^2(\Omega)$-limit of any sequence $\langle \langle \omega \otimes \omega , H^N_\phi \rangle \rangle$, $N \in \N$, with $H^N_\phi$ satisfying properties \eqref{eq:condition1} and \eqref{eq:condition2}. 
\end{definition}

We have all the necessary tools to state and prove our main result, characterizing the law of any accumulation point of the sequence $(\omega_{\omega_0}^{N_m})_{m \in \N}$.
In view of \autoref{prop:Delort-Schochet}, we can interpret \eqref{eq:euler_symm} below as a symmetrized version of Euler equations.

\begin{theorem}
Fix $\epsilon>0$, and let $\tilde{\omega}: \tilde{\Omega} \to C([0,T],H^{-1-\epsilon}(\mathbb{S}^2))$ be given by \autoref{prop:conv}. 
Then for every test function $\phi \in C^\infty(\mathbb{S}^2)$ it holds $\tilde{\mathbb{P}}$-a.s. 
\begin{align} \label{eq:euler_symm}
\langle \tilde{\omega}_t , \phi \rangle
=
\langle \tilde{\omega}_0 , \phi \rangle
+
\int_0^t 
\langle \tilde{\omega}_s \diamond \tilde{\omega}_s , H_\phi \rangle ds,
\qquad
\forall t \in [0,T].
\end{align}
\end{theorem}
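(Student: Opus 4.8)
The plan is to pass to the limit $m\to\infty$ in the weak formulation of the finite-dimensional dynamics \eqref{eq:omega_dyn}. Working on the Skorokhod space of \autoref{prop:conv}, writing $\psi^m = -(-\Delta)^{-1}\tilde{\omega}^m$ and denoting $\pi_N := j_N\Pi_N$ the orthogonal projector onto $L^2_N(\mathbb{S}^2)$, the identity to be passed to the limit is
\begin{align*}
\langle\tilde{\omega}^m_t,\phi\rangle
=
\langle\tilde{\omega}^m_0,\phi\rangle
+
\int_0^t\langle\pi_{N_m}\{\psi^m_s,\tilde{\omega}^m_s\},\phi\rangle\,ds
+
\int_0^t\langle j_{N_m}r^m_s,\phi\rangle\,ds,
\end{align*}
which holds $\tilde{\PP}$-a.s.\ for every $t$, being the transported form of \eqref{eq:omega_dyn}. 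Since this holds exactly for each $m$, it suffices to show that every term converges in probability to its counterpart in \eqref{eq:euler_symm}; the limiting identity then follows.

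The two linear terms converge $\tilde{\PP}$-a.s.\ from $\tilde{\omega}^m\to\tilde{\omega}$ in $C([0,T],H^{-1-\epsilon}(\mathbb{S}^2))$ and the smoothness of $\phi$. The remainder term vanishes in the limit: by the proof of \autoref{cor:bounds} one has $\mathbb{E}^{\mu_N}[\|r^N(0)\|^2_{H^{-\kappa}(\su(N))}]\to 0$ for $\kappa$ large, and stationarity gives the same bound at every time $s$, whence
\begin{align*}
\mathbb{E}\left[\left|\int_0^t\langle j_{N_m}r^m_s,\phi\rangle\,ds\right|\right]
\leq
t\,\|\phi\|_{H^\kappa(\mathbb{S}^2)}\,\sup_{s\in[0,T]}\left(\mathbb{E}^{\mu_{N_m}}[\|r^{N_m}_s\|^2_{H^{-\kappa}(\su(N_m))}]\right)^{1/2}
\xrightarrow[m\to\infty]{}0.
\end{align*}

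The heart of the matter is the nonlinear term. Since $\pi_N$ is self-adjoint and $\tilde{\omega}^m_s,\psi^m_s,\pi_N\phi$ are smooth, integration by parts together with the antisymmetry of the Poisson bracket and the Biot--Savart representation $\nabla(\psi^m_s)^\perp = \int K(\cdot-y)\tilde{\omega}^m_s(y)\,dy$ (valid because $\tilde{\omega}^m_s$ is band-limited) give, after symmetrizing in $(x,y)$,
\begin{align*}
\langle\pi_{N_m}\{\psi^m_s,\tilde{\omega}^m_s\},\phi\rangle
&=
\langle\langle\tilde{\omega}^m_s\otimes\tilde{\omega}^m_s, H^{N_m}_\phi\rangle\rangle,
\\
H^N_\phi(x,y)
&:=
\tfrac12 K(x-y)\big(\nabla(\pi_N\phi)(x)-\nabla(\pi_N\phi)(y)\big),
\end{align*}
up to the sign convention fixing $K$. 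Because $\tilde{\omega}^m_s\in L^2_{N_m}(\mathbb{S}^2)$ we may replace $H^{N_m}_\phi$ by its band-limited projection $(\pi_{N_m}\otimes\pi_{N_m})H^{N_m}_\phi\in H^{2+2\epsilon}(\mathbb{S}^2\times\mathbb{S}^2)$ without changing the pairing. This projected kernel satisfies \eqref{eq:condition1} (it converges to $H_\phi$ in $L^2$, since $\pi_N\phi\to\phi$ smoothly and $\pi_N\otimes\pi_N\to\mathrm{Id}$) and \eqref{eq:condition2} (because $H_\phi$ vanishes on the diagonal and the band-limiting kernel concentrates there as $N\to\infty$).

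It remains to pass to the limit in $\int_0^t\langle\langle\tilde{\omega}^m_s\otimes\tilde{\omega}^m_s, H^{N_m}_\phi\rangle\rangle\,ds$, and here lies the main obstacle: the renormalized product $\langle\tilde{\omega}_s\diamond\tilde{\omega}_s, H_\phi\rangle$ is a \emph{discontinuous} functional of $\tilde{\omega}_s$, so one cannot invoke a.s.\ path convergence alone, and the kernel $H^{N_m}_\phi$ itself varies along the diagonal limit $N_m\to\infty$. I would resolve this by a three-term interpolation through the fixed mollified kernels $H^{(R)}_\phi=\tfrac12 K_{1/R}(\cdot)(\nabla\phi(x)-\nabla\phi(y))$ of \autoref{rmk:existence}:
\begin{align*}
&\int_0^t\langle\langle\tilde{\omega}^m_s\otimes\tilde{\omega}^m_s, H^{N_m}_\phi - H^{(R)}_\phi\rangle\rangle\,ds
\\
&\quad
+
\int_0^t\big(\langle\langle\tilde{\omega}^m_s\otimes\tilde{\omega}^m_s, H^{(R)}_\phi\rangle\rangle - \langle\langle\tilde{\omega}_s\otimes\tilde{\omega}_s, H^{(R)}_\phi\rangle\rangle\big)\,ds
\\
&\quad
+
\int_0^t\big(\langle\langle\tilde{\omega}_s\otimes\tilde{\omega}_s, H^{(R)}_\phi\rangle\rangle - \langle\tilde{\omega}_s\diamond\tilde{\omega}_s, H_\phi\rangle\big)\,ds.
\end{align*}
For fixed $R$ the functional $\omega\mapsto\langle\langle\omega\otimes\omega, H^{(R)}_\phi\rangle\rangle$ is continuous on $H^{-1-\epsilon}(\mathbb{S}^2)$ (the kernel being smooth), so the middle term tends to $0$ in $L^1(\tilde{\PP})$ as $m\to\infty$ by the a.s.\ path convergence and uniform integrability, the latter supplied by the $H^{-1-\epsilon}$-moment bounds of \autoref{cor:bounds} together with stationarity. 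The third term tends to $0$ in $L^2(\tilde{\PP})$ as $R\to\infty$, uniformly in $m$, by \autoref{def:diamond} and \autoref{prop:Delort-Schochet}. For the first term I would use the crucial fact that $\nu_N=(\pi_N)_\ast\nu$, so that under $\nu_N$ the quadratic functionals obey the second-Wiener-chaos isometry with covariance $\pi_N\leq\mathrm{Id}$; this bounds its $L^2$ norm by $C\|H^{N_m}_\phi - H^{(R)}_\phi\|_{L^2}$ plus trace corrections that vanish by the diagonal argument above, and this is small once $R$ and $N_m$ are large, both kernels converging to $H_\phi$ in $L^2$. A standard $3\varepsilon$-argument (fix $R$ large, then let $m\to\infty$) yields convergence in probability, which completes the proof. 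The principal difficulty is exactly this diagonal passage: reconciling the discontinuous renormalized limit with a.s.\ convergence requires the uniform second-chaos bounds together with the vanishing of the diagonal/trace terms, which is where the band-limited structure of $\nu_N$ and the identity $H_\phi(x,x)=0$ play the decisive role.
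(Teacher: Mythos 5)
Your proposal is correct and takes essentially the same route as the paper: pass to the limit in the weak form of \eqref{eq:omega_dyn} along the Skorokhod representation of \autoref{prop:conv}, kill the remainder term via stationarity and the $H^{-\kappa}$ bounds behind \autoref{cor:bounds}, absorb the projection into the kernel $H_{j_{N_m}\Pi_{N_m}\phi}$, and interpolate the quadratic term through the fixed mollified kernels of \autoref{rmk:existence} using \autoref{prop:Delort-Schochet}, \autoref{def:diamond}, and the whiteness of $\tilde{\omega}$ from \autoref{cor:WN}. The only substantive deviation is that you control the varying-kernel term $H^{N_m}_\phi-H^{(R)}_\phi$ by the second-Wiener-chaos isometry under $\nu_{N}=(j_{N}\Pi_{N})_{*}\nu$, whereas the paper splits off $H_{j_{N_m}\Pi_{N_m}\phi}-H_\phi$ separately and bounds it by $\tildeE{\|\tilde{\omega}^m_s\otimes\tilde{\omega}^m_s\|_{L^2(\mathbb{S}^2\times\mathbb{S}^2)}}\,\|H_{j_{N_m}\Pi_{N_m}\phi}-H_\phi\|_{L^2(\mathbb{S}^2\times\mathbb{S}^2)}$ --- your chaos estimate is, if anything, the more careful version of that step.
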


\begin{proof}
Let $\tilde{\omega}^m, \tilde{\omega}$ be given by \autoref{prop:conv}, and fix $\phi \in C^\infty(\mathbb{S}^2)$.
Recalling \eqref{eq:omega_dyn}, it is easy to check for every $m \in \N$ and $\tilde{\PP}$-a.s.
\begin{align*}
\langle \tilde{\omega}^m_t , \phi \rangle
-
\langle \tilde{\omega}^m_0 , \phi \rangle
&=
\int_0^t 
\langle \langle \tilde{\omega}^m_s \otimes \tilde{\omega}^m_s , H_\phi \rangle \rangle ds 
\\
&\quad+
\int_0^t 
\langle \langle \tilde{\omega}^m_s \otimes \tilde{\omega}^m_s , H_{j_{N_m} \Pi_{N_m} \phi} - H_\phi \rangle \rangle ds 
+
\int_0^t
\langle \tilde{r}^{N_m}_s , \phi \rangle ds
\end{align*}
for every $t \in [0,T]$, where $\tilde{r}^{N_m}$ is distributed as $j_{N_m} r^{N_m}$.

Since $\tilde{\omega}^m \to \tilde{\omega}$ as $m \to \infty$ almost surely with respect to the $C([0,T],H^{-1-\epsilon})$ topology, we have
\begin{align*}
\langle \tilde{\omega}^m_t , \phi \rangle
-
\langle \tilde{\omega}^m_0 , \phi \rangle
\to
\langle \tilde{\omega}_t , \phi \rangle
-
\langle \tilde{\omega}_0 , \phi \rangle
\quad\mbox{as } m \to \infty,
\end{align*}
with probability one. Concerning the second summand on the right-hand-side, we notice that $H_{j_{N_m} \Pi_{N_m} \phi} - H_\phi$ converges to zero in $L^2(\mathbb{S}^2 \times \mathbb{S}^2)$ and therefore
\begin{gather*}
\tildeE{\left| \int_0^t 
\langle \langle \tilde{\omega}^m_s \otimes \tilde{\omega}^m_s , H_{j_{N_m} \Pi_{N_m} \phi} - H_\phi \rangle \rangle ds \right|}
\\
\leq C \tildeE{\|\tilde{\omega}^m_s \otimes \tilde{\omega}^m_s\|_{L^2(\mathbb{S}^2\times \mathbb{S}^2)}} \|H_{j_{N_m} \Pi_{N_m} \phi} - H_\phi\|_{L^2(\mathbb{S}^2\times \mathbb{S}^2)}
\\
=C \mathbb{E}^{\nu_{N_m}} \left[ \|\omega_0 \otimes \omega_0\|_{L^2(\mathbb{S}^2\times \mathbb{S}^2)} \right] \|H_{j_{N_m} \Pi_{N_m} \phi} - H_\phi\|_{L^2(\mathbb{S}^2\times \mathbb{S}^2)} \to 0
\end{gather*}
as $m \to \infty$, which implies the almost sure convergence up to a subsequence (that we still denote $m$ with a little abuse of notation):
\begin{align*}
\int_0^t 
\langle \langle \tilde{\omega}^m_s \otimes \tilde{\omega}^m_s , H_{j_{N_m} \Pi_{N_m} \phi} - H_\phi \rangle \rangle ds \to 0.
\end{align*}
Similarly,
\begin{align*}
\tildeE{\left| \int_0^t 
\langle \tilde{r}^{N_m}_s , \phi \rangle ds \right|}
&\leq
C \tildeE{ \| \tilde{r}^{N_m}_s \|_{H^{-\kappa}(\mathbb{S}^2)}} \| \phi \|_{H^{\kappa}(\mathbb{S}^2)} 
\\
&=
C \mathbb{E}^{\mu_{N_m}}{ \| r^{N_m}_s \|_{H^{-\kappa}(\su(N))}} \| \phi \|_{H^{\kappa}(\mathbb{S}^2)} \to 0
\end{align*}
as $m \to \infty$, which implies the almost sure convergence up to a subsequence (that we still denote $m$ with a little abuse of notation):
\begin{align*}
\int_0^t 
\langle \tilde{r}^{N_m}_s , \phi \rangle ds \to 0.
\end{align*}

Finally, let us focus on the first term on the right-hand-side.
Let $H^M_\phi$, $M \in \N$, be a sequence of $H^{2+2\epsilon}(\mathbb{S}^2 \times \mathbb{S}^2)$ functions that approximates $H_\phi$ in the sense of \autoref{prop:Delort-Schochet} above, and exists by \autoref{rmk:existence}.
We can decompose, for fixed $M \in \N$:
\begin{align*}
\int_0^t 
\langle \langle \tilde{\omega}^m_s \otimes \tilde{\omega}^m_s , H_\phi \rangle \rangle ds 
&=
\int_0^t 
\langle \langle \tilde{\omega}^m_s \otimes \tilde{\omega}^m_s , H_\phi - H^M_\phi \rangle \rangle ds 
\\
&\quad+
\int_0^t 
\langle \langle \tilde{\omega}^m_s \otimes \tilde{\omega}^m_s - \tilde{\omega}_s \otimes \tilde{\omega}_s , H^M_\phi \rangle \rangle ds 
\\
&\quad+
\int_0^t 
\langle \langle \tilde{\omega}_s \otimes \tilde{\omega}_s , H^M_\phi \rangle \rangle ds. 
\end{align*}
Now, by condition \eqref{eq:condition1} for every $\delta>0$ there exists $M \in \N$ such that
\begin{align*}
\tildeE{\left|\int_0^t 
\langle \langle \tilde{\omega}^m_s \otimes \tilde{\omega}^m_s , H_\phi-H^M_\phi \rangle \rangle ds \right|} 
\\
\leq 
C
\tildeE{\| \tilde{\omega}^m_s \otimes \tilde{\omega}^m_s \|_{L^2(\mathbb{S}^2 \times \mathbb{S}^2)}}
\| H_\phi-H^M_\phi \|_{L^2(\mathbb{S}^2 \times \mathbb{S}^2)}
\leq\delta;
\end{align*}
moreover, since it is easy to check that $\tilde{\omega}$ is a white noise by \autoref{cor:WN}, by \autoref{prop:Delort-Schochet} and \autoref{def:diamond} for every $\delta>0$ there exists $M \in \N$ such that
\begin{align*}
\tildeE{\left|\int_0^t 
\langle \langle \tilde{\omega}_s \otimes \tilde{\omega}_s , H^M_\phi \rangle \rangle ds-\int_0^t 
\langle \tilde{\omega}_s \diamond \tilde{\omega}_s , H_\phi \rangle ds\right|} \leq \delta.
\end{align*}
Having fixed such $M$, we have
\begin{align*}
&\tildeE{\left|\int_0^t 
\langle \langle \tilde{\omega}^m_s \otimes \tilde{\omega}^m_s , H_\phi \rangle \rangle ds-\int_0^t 
\langle \tilde{\omega}_s \diamond \tilde{\omega}_s , H_\phi \rangle ds\right|} 
\leq 
\\
&\quad
\tildeE{\left|\int_0^t 
\langle \langle \tilde{\omega}^m_s \otimes \tilde{\omega}^m_s-\tilde{\omega}_s \otimes \tilde{\omega}_s , H^M_\phi \rangle \rangle ds\right|} + 2\delta \to 2\delta
\end{align*}
as $m \to \infty$, since $H^M_\phi \in H^{2+2\epsilon}(\mathbb{S}^2 \times \mathbb{S}^2)$ and $\tilde{\omega}^m \to \tilde{\omega}$ in $C([0,T],H^{-1-\epsilon}(\mathbb{S}^2))$, implying $\tilde{\omega}^m \otimes \tilde{\omega}^m \to \tilde{\omega} \otimes \tilde{\omega}$ in $C([0,T],H^{-2-2\epsilon}(\mathbb{S}^2 \otimes \mathbb{S}^2))$.
Since $\delta$ is arbitrary, we deduce
\begin{align*}
&\tildeE{\left|\int_0^t 
\langle \langle \tilde{\omega}^m_s \otimes \tilde{\omega}^m_s , H_\phi \rangle \rangle ds-\int_0^t 
\langle \tilde{\omega}_s \diamond \tilde{\omega}_s , H_\phi \rangle ds\right|} 
\to 0
\end{align*}
as $m \to \infty$, that yields the almost sure convergence 
\begin{align*}
\int_0^t 
\langle \langle \tilde{\omega}^m_s \otimes \tilde{\omega}^m_s , H_\phi \rangle \rangle ds \to \int_0^t 
\langle \tilde{\omega}_s \diamond \tilde{\omega}_s , H_\phi \rangle ds
\end{align*}
up to subsequences. Putting all together we have shown
\begin{align*}
\langle \tilde{\omega}_t , \phi \rangle
-
\langle \tilde{\omega}_0 , \phi \rangle
=
\int_0^t 
\langle \tilde{\omega}_s \diamond \tilde{\omega}_s , H_\phi \rangle ds,
\qquad
\forall t \in [0,T],
\end{align*}
and the proof is complete.
\end{proof}

\section{Open problems} \label{sec:open}

\subsection{Gibbs measure associated to Casimirs}
The 2D Euler equations on a compact surface $S$ have inifinitely many conservation laws.
The following integrals, when defined, are invariants for the dynamics:
\begin{align*} 
H\left(  \omega\right)    & =\int_S\psi\omega d\mbox{vol}_S\\
C_{f}\left(  \omega\right)    & =\int_S f(\omega)d\mbox{vol}_S,
\end{align*}
where $f:\Rr\rightarrow\Rr$ can be any $C^1$ function.
In particular, for $f(x)=x^2$, we have the enstrophy $E\left(  \omega\right) =\int \omega^2d\mbox{vol}_S$.
The presence of these conservation laws comes from the fact that 2D Euler equations are an infinite dimensional Lie--Poisson system on the dual of the Lie algebra of smooth divergence-free vector fields on $S$ \cite{ArKh1998}.
This space can be identified with the space of smooth functions on $S$.
Therefore, because of the Hamiltonian nature of the Euler equations, we formally have that the "flat measure" on $C^{\infty}(S)$ is an invariant measure.
Hence, heuristically we can define the following family of invariant measures for $\alpha,\beta,\gamma_p\geq 0$:
\[
\mu\left(  d\omega\right)  =Z^{-1}\exp\left(  -\alpha E\left(  \omega\right)
-\beta H\left(  \omega\right)  -\sum_{p>2}\gamma_p C_{p}\left(
\omega\right)  \right)  \left[  d\omega\right]
\]
where
\begin{align*}
C_{p}\left(  \omega\right)    & =\int_S\omega^{p}d\mbox{vol}_S,
\end{align*}
$\left[  d\omega\right]  $ is the formal "flat measure" on $C^{\infty}(S)$ and $Z$ is the partition function.
In order to make this more rigorous, we cannot use the formal "flat measure" $\left[  d\omega\right]  $.
Instead, we take the enstrophy measure $\nu$ as reference measure on $H^{-1-}(S)=\cap_{\epsilon>o}H^{-1-\epsilon}(S)$ (cfr. Section~\autoref{sec:gaussian_measures}).  
We could then define $\mu$ as:
\[
\mu\left(  d\omega\right)  =\widetilde{Z}^{-1}\exp\left(  -\beta H\left(
\omega\right)  -\sum_{i}\gamma_p C_{p}\left(  \omega\right)  \right)
\nu\left(  d\omega\right)
\]
where
\[
\widetilde{Z}:=\int\exp\left(  -\beta H\left(  \omega\right)  -\sum_{i}%
\gamma_{p}C_{p}\left(  \omega\right)  \right)  \nu\left(
d\omega\right)  .
\]
We notice that the measure $\mu$ for $\gamma_p=0$ can be defined using the theory of Gaussian measures on $H^{-1-}(S)$. 
However, for instance, taking $\beta=0$ and $\gamma_p\neq 0$ only for $p=4$, the measure
\begin{align*}
\mu\left(  d\omega\right)    & =\widetilde{Z}^{-1}\exp\left(  -\gamma
\int_S\omega^{4}d\mbox{vol}_S\right)  \nu\left(  d\omega\right)  \\
\widetilde{Z}  & :=\int\exp\left(  -\gamma\int_S\omega^{4}d\mbox{vol}_S\right)  \nu\left(  d\omega\right)
\end{align*}
it is not well defined, since we do not have a precise meaning of a power of an element in $H^{-1-}(S)$.
In order to make sense of this operation, one would like to use the \textit{renormalization} theory, that allows to define the renormalized power  
\[
:\int_S\omega^{4}d\mbox{vol}_S:
\]
of a suitable Gaussian measure $\omega$ as the mean square limit of the renormalized power 
\[
\int\omega_{\varepsilon}^{4}d\mbox{vol}_S - 6C_\varepsilon \int\omega_{\varepsilon}^{2}d\mbox{vol}_S + 3C_\varepsilon^2
\]
where $\omega_{\varepsilon}$ is a mollification of $\omega$ and $C_\varepsilon \to \infty$ is a suitable renormalization constant.
Unfortunately, the current renormalization theory does not cover Gaussian measures associated with Casimirs higher than the enstrophy.

The quantized Euler equations \eqref{eq:euler} in $\su(N)$ have the following invariants:
\begin{align*}
H\left( W\right)    & =\Tr(PW)\\
C_{p}\left(  W\right)    & =\Tr(W^p),
\end{align*}
for $p=2,\ldots,N$.
It is known that for smooth $\omega$, we get \cite{MiWeCr1992}:
\begin{align*}
H\left(\Pi_N\omega\right)    &\rightarrow H(\omega)\\
C_{p}\left(  \Pi_N\omega\right)    & \rightarrow C_{p}\left( \omega\right) ,
\end{align*}
for $N\rightarrow\infty$. 
In section~\ref{sec:gaussian_measures}, we have seen that $\nu_N \rightharpoonup \nu$ as measures on $H^{-1-}(\mathbb{S}^2)$.
Let, for instance $p=4$.
Defining
\begin{align*}
\eta_N\left(  dW\right)    & =\widetilde{Z}_N^{-1}\exp\left(  -\gamma
C_{p}\left(W\right)\right)  \mu_N\left(  dW\right)  \\
\widetilde{Z}_N  & :=\int\exp\left(  -\gamma C_{p}\left(W\right)\right)  \mu_N\left(  dW\right),
\end{align*}
we would like to show that $j_N^*\eta_N$ has a weak limit in $H^{-1-}(\mathbb{S}^2)$.

\subsection{Line integrals and Kelvin theorem}
Developing the machinery needed to prove invariance theorems based on line integrals is also an appealing question, having in mind especially Kelvin theorem, see \cite{MaPu1994}. 
In the generalized setting of the enstrophy measure, where all fields are distributional, this looks a formidable task, still open. 
However, we would like to emphasize that line integrals on deterministic curves are well defined, in spite of an apparent difficulty. 
It is the generalization to random curves which is open and, unfortunately, necessary to develop invariance properties, since one should consider curves moving with the fluid, hence random.

Let us thus show that line integrals are well defined on deterministic closed curves. 
We follow the approach developed for the definition of line integrals of the Gaussian Free Field, see for instance \cite{HuMiPe2010}.
Let us restrict ourselves for simplicity to curves which are
boundaries of bounded open connected sets $A\subset\mathbb{S}^{2}$. 
Assume
that $\partial A$ is a Lipschitz boundary and assume that $\gamma:\left[
a,b\right]  \rightarrow\mathbb{S}^{2}$ is a Lipschitz continuous curve
parametrizing $\partial A$. 
Assume that the parametrization is regular, namely
that the derivative $\gamma^{\prime}\left(  t\right)  $, which exists a.s.,
has the property $\left\vert \gamma^{\prime}\left(  t\right)  \right\vert \geq
c>0$ a.s., for some positive constant $c$. It is known that the map%
\[
f\mapsto f|_{\partial A}%
\]
originally defined on $W^{s,2}\left(  \mathbb{S}^{2}\right)  \cap C\left(
\mathbb{S}^{2}\right)  $, for some $s>\frac{1}{2}$, extends to a bounded
linear map from $W^{s,2}\left(  \mathbb{S}^{2}\right)  $ to $L^{2}\left(
\partial A\right)  $ (in fact it takes values in $W^{s-\frac{1}{2},2}\left(
\partial A\right))$. Thanks to regularity of $\gamma$, we can say that the
function
\begin{equation}
t\mapsto f\left(  \gamma\left(  t\right)  \right)  \text{ is of class }%
L^{2}\left(  a,b\right)  \text{, for every }f\in%
%TCIMACRO{\dbigcap \limits_{s>\frac{1}{2}}}%
%BeginExpansion
{\displaystyle\bigcap\limits_{s>\frac{1}{2}}}
%EndExpansion
W^{s,2}\left(  \mathbb{S}^{2}\right)  .\label{integrability along curves}%
\end{equation}
Moreover, for every $s>\frac{1}{2}$ there is a constant $C_{s}>0$ such that%
\begin{equation}
\left\Vert f\circ\gamma\right\Vert _{L^{2}\left(  a,b\right)  }\leq
C_{s}\left\Vert f\right\Vert _{W^{s,2}\left(  \mathbb{S}^{2}\right)
}.\label{continuity of trace}%
\end{equation}

Associated to the rectifiable curve $\gamma$ we may define, for every
$s>\frac{1}{2}$, the \textit{rectifiable current}
\[
\Gamma:W^{s,2}\left(  \mathbb{S}^{2},\mathbb{R}^{2}\right)  \rightarrow
\mathbb{R}%
\]
defined as%
\[
\Gamma\left(  v\right)  =\int_{a}^{b}v\left(  \gamma\left(  t\right)  \right)
\cdot\gamma^{\prime}\left(  t\right)  dt
\]
for every $v\in W^{s,2}\left(  \mathbb{S}^{2},\mathbb{R}^{2}\right)  $. Indeed
notice that, by (\ref{integrability along curves}) the integral is finite and
by (\ref{continuity of trace}) the map $\Gamma$ is bounded. Thus $\Gamma$ is a
bounded linear functional on $W^{s,2}\left(  \mathbb{S}^{2},\mathbb{R}%
^{2}\right)  $, namely it is an element of the dual of $W^{-s,2}\left(
\mathbb{S}^{2},\mathbb{R}^{2}\right)  $, and this holds for every $s>\frac
{1}{2}$:
\begin{equation}
\Gamma\in%
%TCIMACRO{\dbigcap \limits_{s>\frac{1}{2}}}%
%BeginExpansion
{\displaystyle\bigcap\limits_{s>\frac{1}{2}}}
%EndExpansion
W^{-s,2}\left(  \mathbb{S}^{2},\mathbb{R}^{2}\right)
.\label{regularity of the current}%
\end{equation}

Let now $\mu$ be the enstrophy measure on $\mathbb{S}^{2}$ defined in Section~\autoref{sec:gaussian_measures}, namely the centered Gaussian measure, supported on
$W^{-1-\epsilon,2}\left(  \mathbb{S}^{2},\mathbb{R}\right)  $ with identity
covariance%
\[
\int_{W^{-1-\epsilon,2}\left(  \mathbb{S}^{2},\mathbb{R}\right)  }\left\langle
\omega,\varphi\right\rangle \left\langle \omega,\psi\right\rangle \mu\left(
d\omega\right)  =\left\langle \varphi,\psi\right\rangle
\]
for all $\varphi,\psi\in W^{1+\epsilon,2}\left(  \mathbb{S}^{2},\mathbb{R}%
\right)  $, where $\left\langle \cdot,\cdot\right\rangle $ inside the integral
is the dual pairing, outside the scalar product in $L^{2}\left(
\mathbb{S}^{2},\mathbb{R}\right)  $. Let $K$ be the Biot-Savart map from
$W^{-1-\epsilon,2}\left(  \mathbb{S}^{2},\mathbb{R}\right)  $ to
$W^{-\epsilon,2}\left(  \mathbb{S}^{2},\mathbb{R}^{2}\right)  $ and let (we
use the notation $K\ast$ interpreting $K$ as a kernel)
\[
\xi=K\ast\mu
\]
be the centered Gaussian velocity field associated to the enstrophy measure,
namely a centered Gaussian measure, supported on $W^{-\epsilon,2}\left(
\mathbb{S}^{2},\mathbb{R}^{2}\right)  $, such that%
\begin{align*}
& \int_{W^{-\epsilon,2}\left(  \mathbb{S}^{2},\mathbb{R}\right)  }\left\langle
v,w\right\rangle \left\langle v,z\right\rangle \xi\left(  dv\right)  \\
& =\int_{W^{-1-\epsilon,2}\left(  \mathbb{S}^{2},\mathbb{R}\right)
}\left\langle K\ast\omega,w\right\rangle \left\langle K\ast\omega
,z\right\rangle \mu\left(  d\omega\right)  \\
& =\left\langle K^{\prime}\ast w,K^{\prime}\ast z\right\rangle =\left\langle
K\ast K^{\prime}\ast w,z\right\rangle
\end{align*}
for all $w,z\in W^{\epsilon,2}\left(  \mathbb{S}^{2},\mathbb{R}^{2}\right)  $,
where $K^{\prime}$ denotes the dual of $K$. One can recognize that $K\ast
K^{\prime}\ast$ is $\left(  -\Delta\right)  ^{-1}$, hence%
\[
\int_{W^{-\epsilon,2}\left(  \mathbb{S}^{2},\mathbb{R}\right)  }\left\langle
v,w\right\rangle \left\langle v,z\right\rangle \xi\left(  dv\right)
=\left\langle \left(  -\Delta\right)  ^{-1}w,z\right\rangle .
\]

Formally we aim to define
\[
\left\langle \xi,\Gamma\right\rangle =\int_{a}^{b}\xi\left(  \gamma\left(
t\right)  \right)  \cdot\gamma^{\prime}\left(  t\right)  dt.
\]
The key remark is that the covariance property above of the measure
$\xi\left(  dv\right)  $ allows to extend the definition of the Gaussian
random variable $\left\langle v,w\right\rangle $, $v$ selected by $\xi\left(
dv\right)  $, from vector fields $w$ of class $W^{\epsilon,2}\left(
\mathbb{S}^{2},\mathbb{R}^{2}\right)  $ to vector fields of class
$W^{-1,2}\left(  \mathbb{S}^{2},\mathbb{R}^{2}\right)  $, which includes the
space where $\Gamma$ lives, see (\ref{regularity of the current}). 

\begin{proposition}
Under the measure $\xi\left(  dv\right)  $, if $w\in W^{-1,2}\left(
\mathbb{S}^{2},\mathbb{R}^{2}\right)  $ a centered Gaussian random variable
$\left\langle v,w\right\rangle $ is well defined, with variance $\left\langle
\left(  -\Delta\right)  ^{-1}w,w\right\rangle $. Since the rectifiable current
$\Gamma$, associated to a regular Lipschitz curve $\gamma:\left[  a,b\right]
\rightarrow\mathbb{S}^{2}$ as done above, is of class
(\ref{regularity of the current}), the r.v. $\left\langle v,\Gamma
\right\rangle $ is well defined and we take it as the definition of $\int%
_{a}^{b}\xi\left(  \gamma\left(  t\right)  \right)  \cdot\gamma^{\prime
}\left(  t\right)  dt$.
\end{proposition}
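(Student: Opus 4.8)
The plan is to obtain $\langle v,w\rangle$ for $w\in W^{-1,2}(\mathbb{S}^{2},\mathbb{R}^{2})$ by extending, through an isometry into $L^{2}(\xi)$, the Gaussian random variables already available on smooth test vector fields, and then to specialize the construction to $w=\Gamma$. The guiding principle is that the covariance identity for $\xi$ exhibits $(-\Delta)^{-1}$ as the covariance form, whose square root is comparable to the $W^{-1,2}$ norm; since $\Gamma$ belongs to $W^{-1,2}$, it lies in the closure on which the extension lives.

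First I would record the Hilbertian structure. On the test space $W^{\epsilon,2}(\mathbb{S}^{2},\mathbb{R}^{2})$ introduce the seminorm $\|w\|_{*}^{2}:=\langle (-\Delta)^{-1}w,w\rangle$, which is nonnegative because $(-\Delta)^{-1}$ is a positive operator. The covariance identity displayed above for $\xi$ states exactly that the linear map $I:w\mapsto\langle\cdot,w\rangle$ satisfies $\|I(w)\|_{L^{2}(\xi)}^{2}=\|w\|_{*}^{2}$, so $I$ is an isometry from $(W^{\epsilon,2},\|\cdot\|_{*})$ into $L^{2}(\xi)$ and each $I(w)$ is a centered Gaussian random variable. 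Next I would note that $\|\cdot\|_{*}$ is finite on all of $W^{-1,2}$ and equivalent there to the $W^{-1,2}$ norm: since $(-\Delta)^{-1}$ acts on the $k$-th spectral component with factor comparable to $(1+\lambda_{k})^{-1}$, one has $c\,\|w\|_{W^{-1,2}}^{2}\le\|w\|_{*}^{2}\le C\,\|w\|_{W^{-1,2}}^{2}$; equivalently, $(-\Delta)^{-1}:W^{-1,2}\to W^{1,2}$ is bounded and the dual pairing of $W^{1,2}$ with $W^{-1,2}$ gives $|\langle (-\Delta)^{-1}w,w\rangle|\le C\,\|w\|_{W^{-1,2}}^{2}<\infty$. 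In particular the smooth vector fields are dense in $W^{-1,2}(\mathbb{S}^{2},\mathbb{R}^{2})$ for $\|\cdot\|_{*}$.

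By density together with the isometry property, $I$ extends uniquely to an isometry $\bar{I}:W^{-1,2}(\mathbb{S}^{2},\mathbb{R}^{2})\to L^{2}(\xi)$, and for $w\in W^{-1,2}$ I set $\langle v,w\rangle:=\bar{I}(w)$. If $w_{n}\to w$ in $W^{-1,2}$ with $w_{n}$ smooth, then $\langle v,w_{n}\rangle\to\langle v,w\rangle$ in $L^{2}(\xi)$; since the class of centered Gaussian random variables is closed under $L^{2}$-limits, $\langle v,w\rangle$ is centered Gaussian, and its variance is $\lim_{n}\|w_{n}\|_{*}^{2}=\|w\|_{*}^{2}=\langle (-\Delta)^{-1}w,w\rangle$, as asserted. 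Finally I would apply this to the current: by \eqref{regularity of the current} we have $\Gamma\in\bigcap_{s>1/2}W^{-s,2}(\mathbb{S}^{2},\mathbb{R}^{2})$, and choosing any $s\in(1/2,1]$ gives $\Gamma\in W^{-s,2}\subset W^{-1,2}(\mathbb{S}^{2},\mathbb{R}^{2})$; hence $\langle v,\Gamma\rangle$ is a well-defined centered Gaussian with variance $\langle (-\Delta)^{-1}\Gamma,\Gamma\rangle<\infty$, which we adopt as the definition of $\int_{a}^{b}\xi(\gamma(t))\cdot\gamma^{\prime}(t)\,dt$.

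The only step carrying genuine content is the norm comparison $\|w\|_{*}\simeq\|w\|_{W^{-1,2}}$ and the accompanying density statement, which is what allows the closure of the Gaussian isometry to reach $\Gamma$; one must also take a small amount of care with the constant (zero) mode, but this is harmless because the enstrophy measure is supported on mean-zero fields (its reproducing kernel omits $\ell=0$), so $(-\Delta)^{-1}$ is unambiguous on the relevant subspace. Everything else is the standard extension of a Gaussian linear functional by $L^{2}$-isometry.
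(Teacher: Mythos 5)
Your proposal is correct and takes essentially the same route as the paper: both extend the Gaussian isometry $w\mapsto\left\langle v,w\right\rangle$, whose $L^{2}\left(\xi\right)$ norm is controlled via the covariance form $\left\langle\left(-\Delta\right)^{-1}w,w\right\rangle\leq C\left\Vert w\right\Vert_{W^{-1,2}}^{2}$, from smooth fields to all of $W^{-1,2}\left(\mathbb{S}^{2},\mathbb{R}^{2}\right)$ by $L^{2}$-approximation, and then evaluate at $\Gamma\in\bigcap_{s>1/2}W^{-s,2}\subset W^{-1,2}$. The only cosmetic differences are that the paper builds the approximating sequence concretely by mollification ($\theta_{\epsilon}\ast w$) and checks the Cauchy property in $L^{2}\left(\xi\right)$ directly, whereas you invoke abstract density of smooth fields in $W^{-1,2}$; note also that only the upper bound $\left\langle\left(-\Delta\right)^{-1}w,w\right\rangle\leq C\left\Vert w\right\Vert_{W^{-1,2}}^{2}$ is actually needed, so your two-sided norm equivalence claim is superfluous (and would require care about the divergence-free support of $\xi$), but it is harmless to the argument.
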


Let us explain why the Gaussian random variable $\left\langle v,w\right\rangle $ is well
defined also for $w\in W^{-1,2}\left(  \mathbb{S}^{2},\mathbb{R}^{2}\right)
$. First, a fast but formal explanation: if $w,z\in W^{-1,2}\left(
\mathbb{S}^{2},\mathbb{R}^{2}\right)  $, then $\left(  -\Delta\right)
^{-1}w\in W^{1,2}\left(  \mathbb{S}^{2},\mathbb{R}^{2}\right)  $ and the dual
pairing $\left\langle \left(  -\Delta\right)  ^{-1}w,z\right\rangle $ is well
defined. 

More rigorously, if $\theta_{\epsilon}\left(  x\right)  =\epsilon^{-2}%
\theta\left(  \epsilon^{-1}x\right)  $ is a family if classical smooth
symmetric mollifiers on $\mathbb{S}^{2}$, if $w\in W^{-1,2}\left(
\mathbb{S}^{2},\mathbb{R}^{2}\right)  $, standing that $\theta_{\epsilon}\ast
w\in W^{\epsilon,2}\left(  \mathbb{S}^{2},\mathbb{R}^{2}\right)  $, we have%
\begin{align*}
& \int_{W^{-\epsilon,2}\left(  \mathbb{S}^{2},\mathbb{R}\right)  }\left(
\left\langle v,\theta_{\epsilon}\ast w\right\rangle-\left\langle v,\theta_{\epsilon^{\prime}}\ast w\right\rangle \right)
^{2}\xi\left(  dv\right)  \\
& =\left\langle \left(  -\Delta\right)  ^{-1}\left(  \theta_{\epsilon}%
-\theta_{\epsilon^{\prime}}\right)  \ast w,\left(  \theta_{\epsilon}%
-\theta_{\epsilon^{\prime}}\right)  \ast w\right\rangle \\
& =\left\langle \left(  \theta_{\epsilon}-\theta_{\epsilon^{\prime}}\right)
\ast\left(  \theta_{\epsilon}-\theta_{\epsilon^{\prime}}\right)  \ast\left(
-\Delta\right)  ^{-1}w,w\right\rangle
\end{align*}
which implies (by the convergence properties of $\theta_{\epsilon}\ast$ in
$W^{1,2}\left(  \mathbb{S}^{2},\mathbb{R}^{2}\right)  $) that the family
$\left\langle v,\theta_{\epsilon}\ast w\right\rangle $ is Cauchy in $L^{2}$
with respect to the measure $\xi\left(  dv\right)  $. We call $\left\langle
v,w\right\rangle $ its limit, which is a centered
Gaussian r.v. with variance equal to $\left\langle \left(  -\Delta\right)
^{-1}w,w\right\rangle $. 

These properties are based on the fact that $\Gamma$ is deterministic. As said
at the beginning, the extension to random curves is an open problem.

\vspace{.5cm}
Within the quantized Euler equations \eqref{eq:euler} in $\su(N)$, it is possible to identify the discrete analogue of the line integrals of the velocity field.
Alternatively to the usual choice for the Casimirs $C^N_n(W)=Tr(W^n)$, for $n=2,\ldots,N$, one can equivalently consider the eigenvalues $\lambda_i$ of $W$.
Indeed it holds
\[Tr(W^n)=\sum_{i=1}^N \lambda_i^n.\]
The first choice of the Casimirs corresponds to a discrete version of the momenta of the continuous vorticity $C_n(\omega)=\int_{\Ss^2} \omega^n dS$, for $n>1$, whereas the second one corresponds to the conserved quantities given by the Kelvin circulation theorem.
Indeed, we have that the Kelvin circulation theorem implies that for any material domain  $A(t)\in\Ss^2$, i.e. a domain $A=A(t)$ evolving accordingly to the fluid motion, the integral $\int_{A(t)} \omega dS=\int_{\partial A(t)}\textbf{u}\cdot d\textbf{s}$ is invariant in time, where $\nabla\times\textbf{u}=\omega\textbf{n}$, for $\textbf{n}$ normal vector on $\Ss^2$.

We now want to show the heuristic analogy among the eigenvalues of $W$ and the integrals $\int_{A(t)} \omega dS$.
Let us consider then spectral decomposition of $W$:
\[W=E\Lambda E^*,\]
for $E$ unitary and $\Lambda$ purely imaginary diagonal.
Let $e_i$, for $i=1,\ldots,N$ be the columns of $E$ and the $\lambda_i$ the eigenvalues of $W$.
Then we can write:
\[
W=\sum_{i=1}^N \lambda_i e_ie_i^*.
\]
The matrices $e_ie_i^*$ are pairwise orthogonal with respect to the Frobenius inner product.
Hence, 
\[Tr(W^*e_ie_i^*)=\lambda_i.\]
The heuristic analogy with the Kelvin's theorem reads as:
\[\mbox{i} Tr(W^*e_ie_i^*)\approx\int_{A(t)} \omega dS,\]
for some domain $A(t)$ which corresponds to the support of $j_N (\mbox{i}e_ie_i^*)\in C^\infty(\Ss^2)$, for $N\rightarrow\infty$.

Analogously for the other choice of Casimirs, we can define the invariant measure on $\su(N)$ as
\begin{align*}
\eta_N\left(  dW\right)    & =\widetilde{Z}_N^{-1}\exp\left(  -\gamma
\Tr(W^*\mbox{i}e_ie_i^*)^2\right)  \mu_N\left(  dW\right)  \\
\widetilde{Z}_N  & :=\int\exp\left(  -\gamma\Tr(W^*\mbox{i}e_ie_i^*)^2\right)  \mu_N\left(  dW\right),
\end{align*}
we would like to show that $j_N^*\eta_N$ has a weak limit in $H^{-1-}(\mathbb{S}^2)$.

\appendix

\section{Structure constants estimates for the 2-sphere} \label{sec:AppA}
Let $N$ be a positive integer. Let $C^{(N)\underline{\ell}\underline{m}}_{\ell m ,\ell' m'}$ and $C^{\underline{\ell}\underline{m}}_{\ell m ,\ell' m'}$ be respectively the structure constants of $\su(N)$ with respect to the $T^N_{\ell,m}$ basis and $C^\infty(\Ss^2)$ with respect to the $Y_{\ell,m}$ basis and the Poisson bracket \eqref{eq:poisson_brack_sph} (see \cite{RiSt2014}).
Then we have that\footnote{$\left(\begin{smallmatrix}\cdot&\cdot&\cdot\\\cdot&\cdot&\cdot\end{smallmatrix}\right)$
are the Wigner 3j-symbols and $\lbrace\begin{smallmatrix}\cdot&\cdot&\cdot\\\cdot&\cdot&\cdot\end{smallmatrix}\rbrace$
 are the  Wigner 6j-symbols.}:
\begin{align*}
C^{(N)\underline{\ell}\underline{m} }_{\ell m ,\ell' m'}=&(N+1)^{3/2}(1-(-1)^{\ell +\ell' +\underline{\ell}})(-1)^{N+\underline{m}}\sqrt{2\ell +1}\sqrt{2\ell' +1}\sqrt{2\underline{\ell}+1} \cdot \\
&\cdot\left( \begin{array}{ccc}
\ell  & \ell'  & \underline{\ell} \\
m  & m' & -\underline{m} 
\end{array} \right)
\Bigg\lbrace \begin{array}{ccc}
\ell  & \ell'  & \underline{\ell} \\
\frac{N}{2} & \frac{N}{2} & \frac{N}{2} 
\end{array} \Bigg\rbrace \\
=& (N+1)(1-(-1)^{\ell +\ell' +\underline{\ell}})\sqrt{2\ell +1}\sqrt{2\ell' +1}\sqrt{2\underline{\ell}+1} \cdot \\
&\cdot\left( \begin{array}{ccc}
\ell  & \ell'  & \underline{\ell} \\
m  & m' & -\underline{m} 
\end{array} \right) \ell !\ell' !\underline{\ell}! \Delta(\ell ,\ell' ,\underline{\ell}) \prod_{p_1=1}^{\ell}\prod_{p_2=1}^{\ell'}\prod_{p_3=1}^{\underline{\ell}}\Big(1-\Big(\frac{p_i}{N+1}\Big)^2\Big)^{-1/2}\cdot \\
&\cdot \sum_{k=\max\lbrace\ell,\ell',\underline{\ell}\rbrace}^{min\lbrace\ell+\ell',\ell'+\underline{\ell},\ell+\underline{\ell}\rbrace}\dfrac{(-1)^k S(k,L,N)}{R(\ell ,\ell' ,\underline{\ell},k)},
\end{align*}
where
\begin{align*}
\Delta(\ell ,\ell' ,\underline{\ell})=&\sqrt{\dfrac{(\ell +\ell' -\underline{\ell})!(\ell -\ell' +\underline{\ell})!(-\ell +\ell' +\underline{\ell})!}{(l+\ell' +\underline{\ell}+1)!}}\\
S(k,L,N)=&\prod_{i=k}^{k-L} \Big(1+\frac{i}{N+1}\Big) \\
R(\ell ,\ell' ,\underline{\ell},k)=&(k-\ell)!(k-\ell')!(k-\underline{\ell})!(\ell+\ell'-k)!(\ell'+\underline{\ell}-k)!(\ell+\underline{\ell}-k)!\\
L=&\ell +\ell' +\underline{\ell},
\end{align*}
and
\begin{align*}
C_{\ell m  \ell' m'}^{\underline{\ell} \underline{m}}=&(1-(-1)^{\ell +\ell' +\underline{\ell}})(-1)^{\underline{m}+1}\sqrt{2l+1}\sqrt{2\ell' +1}\sqrt{2\underline{\ell}+1}\cdot \\ 
&\cdot 
\left( \begin{array}{ccc}
\ell  & \ell'  & \underline{\ell} \\
m  & m' & -\underline{m}  
\end{array} \right)P(\ell ,\ell' ,\underline{\ell}),
\end{align*}
where, for odd values of $L=\ell +\ell' +\underline{\ell}$,
\begin{align*}
P(\ell ,\ell' ,\underline{\ell})=&(-1)^{(\ell +\ell' -\underline{\ell}+1)/2}\Delta(\ell ,\ell' ,\underline{\ell})(\ell +\ell' +\underline{\ell}+1)\cdot\\
&\cdot\dfrac{((\ell +\ell' +\underline{\ell}-1)/2)!}{((-\ell +\ell' +\underline{\ell}-1)/2)!((\ell -\ell' +\underline{\ell}-1)/2)!((\ell +\ell' -\underline{\ell}-1)/2)!}.
\end{align*}
Note that for even values of $L=\ell +\ell' +\underline{\ell}$, P may be arbitrarily defined.

Developing $C^{(N)\underline{\ell}\underline{m} }_{\ell m ,\ell' m'}$ with respect to $\mu=\frac{1}{N+1}$, one finds that the even powers of the series vanish. In fact, one can check the following identities:
\begin{align*}
S(k,L,-\mu)&=S(L-k,L,\mu)\\
R(\ell ,\ell' ,\underline{\ell},k)&=R(\ell ,\ell' ,\underline{\ell},L-k).
\end{align*}
These imply, relabelling $k$ with $L-k$, that:
\begin{align*}
 \sum_{k=\max\lbrace\ell,\ell',\underline{\ell}\rbrace}^{min\lbrace\ell+\ell',\ell'+\underline{\ell},\ell+\underline{\ell}\rbrace}\dfrac{(-1)^k S(k,L,\mu)}{R(\ell ,\ell' ,\underline{\ell},k)}&= \sum_{k=\max\lbrace\ell,\ell',\underline{\ell}\rbrace}^{min\lbrace\ell+\ell',\ell'+\underline{\ell},\ell+\underline{\ell}\rbrace}\dfrac{(-1)^{L-k} S(k,L,-\mu)}{R(\ell ,\ell' ,\underline{\ell},k)}\\
&=(-1)^{L} \sum_{k=\max\lbrace\ell,\ell',\underline{\ell}\rbrace}^{min\lbrace\ell+\ell',\ell'+\underline{\ell},\ell+\underline{\ell}\rbrace}\dfrac{(-1)^k S(k,L,-\mu)}{R(\ell ,\ell' ,\underline{\ell},k)}
\end{align*}
and so for even powers of $\mu$ only even $L$ terms survive but because of the coefficient $(1-(-1)^{l+\ell' +\underline{\ell}})$ in $C^{(N)\underline{\ell}\underline{m} }_{\ell m ,\ell' m'}$, these can be ignored.
Finally, since the term 
\[ \prod_{p_1=1}^{\ell}\prod_{p_2=1}^{\ell'}\prod_{p_3=1}^{\underline{\ell}}\Big(1-\Big(\frac{p_i}{N+1}\Big)^2\Big)^{-1/2}=1+\mathcal{O}\left(\frac{1}{(N+1)^2}\right),\]
the calculations above imply that the linear convergence proved in \cite{RiSt2014} is actually quadratic for $\ell ,\ell' ,\underline{\ell}\ll N$, i.e. for $\ell ,\ell' ,\underline{\ell}$ fixed while $N\rightarrow\infty$:
\[ 
C^{(N)\underline{\ell}\underline{m} }_{\ell m ,\ell' m'}=C^{\underline{\ell}\underline{m} }_{\ell m ,\ell' m'}+\mathcal{O}\left(\frac{1}{(N+1)^2}\right).
\]

\begin{lemma}[$C^{\underline{\ell}\underline{m} }_{\ell m ,\ell' m'}$ bounds]\label{lem:str_const_SH}
There exists a constant $C>0$ such that the structure constants of the spherical harmonics in the usual basis satisfy the following bound:
\[
|C^{\underline{\ell}\underline{m} }_{\ell m ,\ell' m'}|\leq C \min\lbrace \ell \ell' ,\ell \underline{\ell},\ell' \underline{\ell}\rbrace,
\]
for any $\ell ,\ell' ,\underline{\ell}=1,2,...$, satisfying the triangular inequality.
\end{lemma}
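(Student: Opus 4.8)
The plan is to estimate directly the explicit expression for $C^{\underline{\ell}\underline{m}}_{\ell m,\ell'm'}$ recorded above. Since the factor $1-(-1)^{\ell+\ell'+\underline{\ell}}$ vanishes whenever $L:=\ell+\ell'+\underline{\ell}$ is even, the inequality is trivial in that case and I may assume $L$ odd. Discarding the unit-modulus phase, the problem reduces to bounding
\[
|C^{\underline{\ell}\underline{m}}_{\ell m,\ell'm'}|
=2\sqrt{(2\ell+1)(2\ell'+1)(2\underline{\ell}+1)}\,
\left|\begin{pmatrix}\ell&\ell'&\underline{\ell}\\ m&m'&-\underline{m}\end{pmatrix}\right|\,
|P(\ell,\ell',\underline{\ell})|.
\]
Each of the three factors is symmetric in $(\ell,\ell',\underline{\ell})$ (the $3j$-symbol through its modulus, by the permutation symmetry of its columns), and so is the target quantity $\min\{\ell\ell',\ell\underline{\ell},\ell'\underline{\ell}\}$; hence I may assume $\underline{\ell}=\max\{\ell,\ell',\underline{\ell}\}$, so that the minimum equals $\ell\ell'$.

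First I would control the $3j$-symbol by the orthogonality relation
\[
\sum_{m,m'}\begin{pmatrix}\ell&\ell'&\underline{\ell}\\ m&m'&-\underline{m}\end{pmatrix}^2=\frac{1}{2\underline{\ell}+1},
\]
which forces the symbol to be at most $(2\underline{\ell}+1)^{-1/2}$ in absolute value. Because $\underline{\ell}$ is the largest index, this choice cancels the largest of the three square-root factors and leaves $|C^{\underline{\ell}\underline{m}}_{\ell m,\ell'm'}|\le C\sqrt{\ell\ell'}\,|P(\ell,\ell',\underline{\ell})|$.

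The heart of the proof is an estimate on $P$. Writing $A=\ell+\ell'-\underline{\ell}$, $B=\ell-\ell'+\underline{\ell}$, $D=-\ell+\ell'+\underline{\ell}$, which are positive odd integers by the triangle inequality and the parity of $L$, and $n=(L-1)/2$, I would use $\Delta(\ell,\ell',\underline{\ell})^2=A!B!D!/(L+1)!$ together with the identity $\tfrac{(2\gamma+1)!}{(\gamma!)^2}=(2\gamma+1)\binom{2\gamma}{\gamma}$ to bring $|P|^2$ to the closed form
\[
|P|^2=\frac{2(n+1)}{2n+1}\,
\frac{ABD\,\binom{A-1}{(A-1)/2}\binom{B-1}{(B-1)/2}\binom{D-1}{(D-1)/2}}{\binom{2n}{n}}.
\]
Applying the uniform two-sided asymptotics $\binom{2k}{k}\asymp 4^k/\sqrt{k+1}$ and using $\tfrac{A-1}{2}+\tfrac{B-1}{2}+\tfrac{D-1}{2}=n-1$ (so that the powers of $4$ collapse to a constant), this yields
\[
|P|\le C\bigl[(A+1)(B+1)(D+1)(L+1)\bigr]^{1/4}.
\]
The bracket is $16$ times the squared area of the Euclidean triangle with sides $\ell,\ell',\underline{\ell}$, so morally $|P|\asymp\sqrt{\mathrm{Area}}$, but I will not need this interpretation.

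Finally I would combine the estimates. With $\underline{\ell}=\max$ the triangle inequality $|\ell-\ell'|\le\underline{\ell}\le\ell+\ell'$ gives $A+1\le 2\min(\ell,\ell')$, $B+1\le 3\ell$, $D+1\le 3\ell'$ and $L+1\le 3(\ell+\ell')$; since $\min(\ell,\ell')(\ell+\ell')\le 2\ell\ell'$, the product telescopes to $(A+1)(B+1)(D+1)(L+1)\le C(\ell\ell')^2$, whence $|P|\le C\sqrt{\ell\ell'}$ and
\[
|C^{\underline{\ell}\underline{m}}_{\ell m,\ell'm'}|\le C\sqrt{\ell\ell'}\cdot\sqrt{\ell\ell'}=C\ell\ell'=C\min\{\ell\ell',\ell\underline{\ell},\ell'\underline{\ell}\}.
\]
I expect the third step to be the main obstacle: turning the factorial and $3j$ data defining $P$ into the clean quarter-power bound requires the uniform central-binomial asymptotics and careful tracking of the additive constants, so that the estimate stays valid both when one index is much smaller than the sum of the other two and when all three are comparable.
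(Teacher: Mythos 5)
Your proof is correct and follows essentially the same route as the paper's: you bound the $3j$-symbol by $(2\underline{\ell}+1)^{-1/2}$ (the paper's Step 2, which you rederive from the orthogonality relation), prove $|P(\ell,\ell',\underline{\ell})|\leq C\,(L\,L_1\,L_2\,L_3)^{1/4}$ with $L_1=-\ell+\ell'+\underline{\ell}$, etc.\ (the paper's Step 1, obtained there by a heuristic Stirling computation and by you via the exact closed form for $|P|^2$ in terms of central binomial coefficients with $\binom{2k}{k}\asymp 4^k/\sqrt{k+1}$), and then combine the two bounds using the triangle inequality exactly as in the paper's Step 3. Your version of the Stirling step is in fact a tighter, fully rigorous packaging of the paper's chain of approximations, but the argument is the same in substance.
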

\proof
We have seen that the structure constants $C_{\ell m  \ell' m'}^{\underline{\ell}}$ can be written in the following way
\begin{align*}
C_{\ell m  \ell' m'}^{\underline{\ell} \underline{m}}=(1-(-1)^{\ell +\ell' +\underline{\ell}})(-1)^{\underline{m}+1}&\sqrt{2\ell +1}\sqrt{2\ell' +1}\sqrt{2\underline{\ell}+1}\cdot \\ 
&\cdot 
\left( \begin{array}{ccc}
\ell  & \ell'  & \underline{\ell} \\
m  & m' & -\underline{m}  
\end{array} \right)P(\ell ,\ell' ,\underline{\ell}).
\end{align*}

\textbf{Step 1.} Let's first focus on $P(\ell ,\ell' ,\underline{\ell})$. Let's first rewrite it in terms of $L,L_1 =L-2\ell ,L_2 =L-2\ell' ,L_3=L-2\underline{\ell}$:
\begin{align*}
P(\ell ,\ell' ,\underline{\ell})=(-1)^{(L_3+1)/2}\sqrt{\dfrac{L_3!L_2!L_1!}{(L+1)!}}(L+1)\dfrac{((L-1)/2)!}{((L_1-1)/2)!((L_2-1)/2)!((L_3-1)/2)!}
\end{align*}
Using the Stirling approximation of the factorial we get:
\begin{align*}
&P(\ell ,\ell' ,\underline{\ell})
\approx\sqrt[4]{\dfrac{L_1 L_2 L_3}{L+1}}\dfrac{e^{(L+1)/2}}{e^{L_3/2}e^{L_2/2}e^{L_1/2}}\dfrac{L_1^{L_1/2}L_2^{L_2/2}L_3^{L_3/2}}{(L+1)^{(L+1)/2}}(L+1)\sqrt{\dfrac{L-1}{(L_1-1)(L_2-1)(L_3-1)}}\cdot\\
&\cdot\dfrac{e^{(L_3-1)/2}e^{(L_2-1)/2}e^{(L_1 -1)/2}}{e^{(L-1)/2}}\dfrac{((L-1)/2)^{(L-1)/2}}{((L_1-1)/2)^{(L_1-1)/2}((L_2-1)/2)^{(L_2-1)/2}((L_3-1)/2)^{(L_3-1)/2}}
\\
&
\\
\approx&\sqrt{\dfrac{(L-1)(L_1 L_2 L_3)^{1/2}}{(L+1)^{1/2}(L_1-1)(L_2-1)(L_3-1)}}L_1^{1/2}L_2^{1/2}L_3^{1/2}\dfrac{L_1^{(L_1-1)/2}L_2^{(L_2-1)/2}L_3^{(L_3-1)/2}}{(L+1)^{(L-1)/2}}\cdot\\
&\cdot\dfrac{(L-1))^{(L-1)/2}}{(L_1-1)^{(L_1-1)/2}(L_2-1)^{(L_2-1)/2}(L_3-1)^{(L_3-1)/2}}\dfrac{(1/2)^{(L-1)/2}}{(1/2)^{(L_1-1)/2}(1/2)^{(L_2-1)/2}(1/2)^{(L_3-1)/2}}\\
\\
&
\\
\approx&\sqrt[4]{L L_1 L_2  L_3}\\
\end{align*}
where we have repeatedly used the equality: $L_1 +L_2 +L_3=L$. From this, using the definition of the $L_i$ and the fact that the $\ell,\ell',\underline{\ell}$ satisfy the triangular inequality, it is straightforward to check that:
\[
P(\ell ,\ell' ,\underline{\ell})\leq C \min\lbrace \sqrt{\ell }\sqrt{\ell' },\sqrt{\ell }\sqrt{\underline{\ell}},\sqrt{\ell' }\sqrt{\underline{\ell}}\rbrace.
\]

%We have that\todo[author=MV]{Is the exponent 3/4 or 1?}:
%\[1/2L^{1/2}\leq P(\ell ,\ell' ,\underline{\ell})\leq 1/3L^{3/4}.\] 

\textbf{Step 2.} For any $\ell^*\in\lbrace \ell,\ell', \underline{\ell}\rbrace$, we have (see \cite{RiSt2014}):
\[
\mid\sqrt{2\ell^*+1}\left( \begin{array}{ccc}
\ell  & \ell'  & \underline{\ell} \\
m  & m' & -\underline{m}  
\end{array}\right)\mid\leq 1
\]

\textbf{Step 3.} Finally, using the results in Step 1 and Step 2, we get:
\[
|C^{\underline{\ell}\underline{m} }_{\ell m ,\ell' m'}|\leq C \min\lbrace \ell \ell' ,\ell \underline{\ell},\ell' \underline{\ell}\rbrace,
\]
for some constant $C>0$.
\endproof

\begin{lemma}[$C^{(N) \underline{\ell}\underline{m} }_{\ell m ,\ell' m'}$ bounds]\label{lem:str_const_sun}
The structure constants $C^{(N) \underline{\ell}\underline{m} }_{\ell m ,\ell' m'}$ satisfy the following bounds. There exists some constant $C>1$ such that:
\begin{enumerate}
\item $C^{(N) \underline{\ell}\underline{m} }_{\ell m ,\ell' m'}\leq C C^{\underline{\ell}\underline{m} }_{\ell m ,\ell' m'}$, \hspace{1cm}for $\ell ,\ell' ,\underline{\ell}$ fixed, for $N\rightarrow\infty$;\\
\item $C^{(N) \underline{\ell}\underline{m} }_{\ell m ,\ell' m'}\leq CN$,\hspace{1cm} for only one index of $\lbrace\ell,\ell',\underline{\ell}\rbrace$ fixed, while the other two diverge, for $N\rightarrow\infty$;
\item $C^{(N) \underline{\ell}\underline{m} }_{\ell m ,\ell' m'}\leq C\sqrt{N}$,\hspace{1cm} for $\ell ,\ell' ,\underline{\ell}\rightarrow\infty$, for $N\rightarrow\infty$;
\end{enumerate}
for any $\ell ,\ell' ,\underline{\ell}=1,2,...$, satisfying the triangular inequalities.
\end{lemma}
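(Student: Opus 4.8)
The plan is to work from the two explicit expressions for $C^{(N)\underline\ell\underline m}_{\ell m,\ell' m'}$ obtained at the start of this appendix, isolating in both the Wigner $3j$-symbol. Its size is already controlled by Step~2 in the proof of Lemma~\ref{lem:str_const_SH}: applying $\sqrt{2\ell^\ast+1}\,|3j|\le 1$ to the largest index $\ell^\ast=\max\{\ell,\ell',\underline\ell\}$ gives, with $a\le b\le c$ the ordered triple,
\[
\sqrt{(2\ell+1)(2\ell'+1)(2\underline\ell+1)}\,
\left|\left(\begin{array}{ccc}\ell&\ell'&\underline\ell\\ m&m'&-\underline m\end{array}\right)\right|
\le \sqrt{(2a+1)(2b+1)} .
\]
After this reduction all three bounds become statements about the $m$-independent reduced factor, i.e.\ (up to the prefactor $1-(-1)^{\ell+\ell'+\underline\ell}$, bounded by $2$) the quantity $(N+1)^{3/2}\left\{\begin{smallmatrix}\ell&\ell'&\underline\ell\\ N/2&N/2&N/2\end{smallmatrix}\right\}$, equivalently the explicit product $(N+1)\,\ell!\,\ell'!\,\underline\ell!\,\Delta(\ell,\ell',\underline\ell)\,\Theta_N\,\Sigma_N$, where I abbreviate $\Theta_N=\prod(1-(p/(N+1))^2)^{-1/2}$ and $\Sigma_N=\sum_k(-1)^kS(k,L,N)/R(\ell,\ell',\underline\ell,k)$.

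Item~(1) needs no new computation: it is exactly the quadratic convergence $C^{(N)\underline\ell\underline m}_{\ell m,\ell' m'}=C^{\underline\ell\underline m}_{\ell m,\ell' m'}+\mathcal O((N+1)^{-2})$ proved above for fixed indices, since for odd $L$ the continuous constant is nonzero and the ratio tends to $1$ (giving the claim with any fixed $C>1$ for $N$ large), while for even $L$ both sides vanish because of the factor $1-(-1)^{\ell+\ell'+\underline\ell}$. For items~(2) and~(3) I would exploit the symmetries $S(k,L,-\mu)=S(L-k,L,\mu)$ and $R(\ell,\ell',\underline\ell,k)=R(\ell,\ell',\underline\ell,L-k)$ recorded above, with $\mu=1/(N+1)$: exactly as in the argument that gave quadratic convergence, they force $\Sigma_N$ to be an \emph{odd} function of $\mu$, so the explicit factor $(N+1)$ in the reduced factor is compensated by the leading power of $\Sigma_N$. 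Combining this with the Stirling estimate of Step~1 of Lemma~\ref{lem:str_const_SH}, which bounds $\ell!\,\ell'!\,\underline\ell!\,\Delta(\ell,\ell',\underline\ell)$ by the same $\min\{\sqrt{\ell\ell'},\sqrt{\ell\underline\ell},\sqrt{\ell'\underline\ell}\}$-type envelope that produced $P(\ell,\ell',\underline\ell)$ in the continuous case, and with the $3j$-reduction above, one should read off the announced powers of $N$ from how many of the factors $\sqrt{2\ell+1}$ survive: the triangle inequality $|\ell-\ell'|\le\underline\ell\le\ell+\ell'$ forces any two diverging indices to be comparable, which separates the case of one bounded index (item~2, yielding $CN$) from the case of three diverging indices (item~3, yielding the smaller $C\sqrt N$).

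The hard part will be making items~(2) and~(3) uniform in the genuinely large regime $\ell,\ell',\underline\ell\sim N$, where the mere oddness of $\Sigma_N$ in $\mu$ is not enough. There the Taylor coefficients of $\Sigma_N$ grow with the indices and, once $\mu\,\ell=\mathcal O(1)$, no finite truncation of the odd series is legitimate; moreover $\Theta_N$ is not harmless, since $\log\Theta_N\sim (N+1)\int_0^{\ell/(N+1)}\log(1-x^2)\,dx$ grows linearly in $N$ when $\ell$ is of order $N$. The required polynomial bounds can therefore only emerge from an exponential cancellation between $\Theta_N$ and $\ell!\,\ell'!\,\underline\ell!\,\Delta(\ell,\ell',\underline\ell)\,\Sigma_N$, i.e.\ from a genuine uniform asymptotic for the $6j$-symbol $\left\{\begin{smallmatrix}\ell&\ell'&\underline\ell\\ N/2&N/2&N/2\end{smallmatrix}\right\}$ with three large equal lower entries, valid uniformly down to $\ell,\ell',\underline\ell=\mathcal O(1)$. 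I would establish this by a Laplace/saddle-point analysis of the alternating sum $\Sigma_N$ after inserting Stirling expansions of $S$ and $R$ (alternatively, by invoking Edmonds-type asymptotics for $6j$-symbols with a large subset of arguments), and then convert the resulting envelope into the stated bounds $CN$ and $C\sqrt N$ using the triangle constraints; verifying that a single constant $C>1$ works across all three regimes is the final bookkeeping step.
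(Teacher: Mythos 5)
Your item~(1) is exactly the paper's argument (quadratic convergence for fixed indices, plus the observation that $C^{\underline{\ell}\underline{m}}_{\ell m,\ell' m'}=0$ forces $C^{(N)\underline{\ell}\underline{m}}_{\ell m,\ell' m'}=0$, so the ratio tends to $1$), and your reduction via $\sqrt{2\ell^\ast+1}\,|3j|\le 1$ matches the paper's use of that bound. But for items~(2) and~(3) your proposal has a genuine gap, and you name it yourself: the entire quantitative content of these two bounds is a \emph{uniform} asymptotic for the $6j$-symbol $\left\lbrace\begin{smallmatrix}\ell&\ell'&\underline{\ell}\\ N/2&N/2&N/2\end{smallmatrix}\right\rbrace$ in the regime where two or three of $\ell,\ell',\underline{\ell}$ grow with $N$, and you defer precisely this step (``the hard part'') to an unexecuted Laplace/saddle-point analysis of the alternating sum $\Sigma_N$. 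The oddness of $\Sigma_N$ in $\mu=1/(N+1)$, which you propose to exploit, only controls finite Taylor truncations and hence only reproves the fixed-index regime of item~(1); as you correctly observe, once $\mu\ell=\mathcal{O}(1)$ the truncation is illegitimate and $\Theta_N$ contributes at exponential order, so nothing in your written argument actually yields the powers $N$ and $\sqrt{N}$. Carrying out that saddle-point analysis uniformly down to $\ell,\ell',\underline{\ell}=\mathcal{O}(1)$ would amount to re-deriving Ponzano--Regge-type asymptotics from scratch, which is far beyond the ``final bookkeeping'' you describe.

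The paper avoids this entirely by citing the known estimates that you mention only parenthetically as an ``alternative'': for item~(2), with one index (say $\ell$) fixed, the Edmonds asymptotic formula $\left\lbrace\begin{smallmatrix}\ell&\ell'&\underline{\ell}\\ N/2&N/2&N/2\end{smallmatrix}\right\rbrace\leq C\big((2\ell'+1)(N+1)\big)^{-1/2}+\mathcal{O}(1/N^2)$, combined with $\sqrt{2\underline{\ell}+1}\,|3j|\le 1$, absorbs the prefactor $(N+1)^{3/2}\sqrt{(2\ell+1)(2\ell'+1)(2\underline{\ell}+1)}$ down to $(N+1)\sqrt{2\ell+1}\le CN$; for item~(3), the Ponzano--Regge bound $\left\lbrace\begin{smallmatrix}\ell&\ell'&\underline{\ell}\\ N/2&N/2&N/2\end{smallmatrix}\right\rbrace\leq C\big(N^3(2\ell+1)(2\ell'+1)(2\underline{\ell}+1)\big)^{-1/2}+\mathcal{O}(1/N^2)$ cancels $(N+1)^{3/2}$ against $N^{3/2}$ and the two $\sqrt{2\ell+1}$-type factors, leaving $C\sqrt{2\underline{\ell}+1}\le C\sqrt{N}$. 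So to repair your proof, promote the cited $6j$ asymptotics from fallback to primary tool and drop the saddle-point program; as written, your argument establishes (1) but leaves (2) and (3) unproven. (One small correct point worth keeping: your observation that in item~(2) the triangle inequality forces the two diverging indices to stay within a bounded distance of each other is right, and it is implicitly what makes a single Edmonds-type bound suffice there.)
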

\proof
\begin{enumerate}
\item We have the classical result \cite{RiSt2014}:
\[
C^{(N) \underline{\ell}\underline{m} }_{\ell m ,\ell' m'}=C^{\underline{\ell}\underline{m} }_{\ell m ,\ell' m'}+\mathcal{O}\left(\frac{1}{(N+1)^2}\right).
\]
Moreover, $C^{\underline{\ell}\underline{m} }_{\ell m ,\ell' m'}=0$ if and only if the 3j-symbol factor is zero or the triad $\ell ,\ell' ,\underline{\ell}$ does not respect the triangular inequalities. 
Therefore, if $C^{\underline{\ell}\underline{m} }_{\ell m ,\ell' m'}=0$ then $C^{(N) \underline{\ell}\underline{m} }_{\ell m ,\ell' m'}=0$. 
Hence, we can write 
\[
C^{(N) \underline{\ell}\underline{m} }_{\ell m ,\ell' m'}=C(\ell ,\ell' ,\underline{\ell},N)C^{\underline{\ell}\underline{m} }_{\ell m ,\ell' m'}
\]
where $C(\ell ,\ell' ,\underline{\ell},N)\rightarrow 1$, for $N\rightarrow\infty$. Therefore, for $N$ sufficiently large we find $C>1$ for which the thesis is valid.
\item Let us fix $\ell $ and let $\ell' ,\underline{\ell}$ going to infinity for $N\rightarrow\infty$. 
Using the the Edmonds asymptotic formula for the 6j-symbols \cite{Fl1998}:
\[
\left\lbrace\begin{array}{ccc}
\ell  & \ell'  & \underline{\ell} \\
N/2 & N/2 & N/2  
\end{array}\right\rbrace\leq \frac{C}{\sqrt{(2\ell' +1)(N+1)}}+\mathcal{O}(1/N^2)
\]
and the fact that
\[
\mid\sqrt{2\underline{\ell}+1}\left( \begin{array}{ccc}
\ell  & \ell'  & \underline{\ell} \\
m  & m' & -\underline{m}  
\end{array}\right)\mid\leq 1
\] find $C>0$ such that:
\[
C^{(N) \underline{\ell}\underline{m} }_{\ell m ,\ell' m'}\leq CN.
\]
Moreover, by the permutation properties of the 3j and 6j symbols, we obtain the same result permuting the three indexes $\ell ,\ell' ,\underline{\ell}$.
\item When all the coefficients of the triad $\ell ,\ell' ,\underline{\ell}$ grow simultaneously, for $N\rightarrow\infty$, we can use the Ponzano-Regge formula (see \cite{Gu2008}):
\[
\left\lbrace\begin{array}{ccc}
\ell  & \ell'  & \underline{\ell} \\
N/2 & N/2 & N/2  
\end{array}\right\rbrace\leq \frac{C}{\sqrt{N^3(2\ell +1)(2\ell' +1)(2\underline{\ell}+1)}}+\mathcal{O}(1/N^2)
\]
Let $\ell \sim N^{\alpha_1},\ell' \sim N^{\alpha_2},\underline{\ell}\sim N^{\alpha_3}$, for $0<\alpha_1,\alpha_2,\alpha_3<1$. Then, there exists a constant $C$ such that
\[
C^{(N) \underline{\ell}\underline{m} }_{\ell m ,\ell' m'}\leq C\sqrt{2\underline{\ell}+1}\leq C\sqrt{N}.
\]
\end{enumerate}
\endproof
We can now derive the core result in the consistency proof.
\begin{proposition}\label{prop:str_const_diff}
There exists a constant $C$ such that, for any $N$ odd and any set of admissible indexes $\ell ,m ,\ell' ,m',\underline{\ell},\underline{m}\leq N$, it holds:
\begin{enumerate}
\item $|C^{(N)\underline{\ell}\underline{m}}_{\ell m ,\ell' m'}-C^{\underline{\ell}\underline{m}}_{\ell m ,\ell' m'}|\leq
C\min\lbrace \ell \ell' ,\ell \underline{\ell},\ell' \underline{\ell}\rbrace$,\\
\item $N^2|C^{(N)\underline{\ell}\underline{m}}_{\ell m ,\ell' m'}-C^{\underline{\ell}\underline{m}}_{\ell m ,\ell' m'}|\leq
C\max\lbrace \ell ^2,\ell'^2,\underline{\ell}^2\rbrace\cdot\min\lbrace \ell \ell' ,\ell \underline{\ell},\ell' \underline{\ell}\rbrace$.
\end{enumerate}
\end{proposition}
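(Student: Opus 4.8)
The plan is to read the estimate directly off the explicit formulas for $C^{(N)\underline\ell\underline m}_{\ell m,\ell' m'}$ and $C^{\underline\ell\underline m}_{\ell m,\ell' m'}$ recorded at the start of this appendix, which differ only by the replacement of $(N+1)^{3/2}\left\{\begin{smallmatrix}\ell&\ell'&\underline\ell\\ N/2&N/2&N/2\end{smallmatrix}\right\}$ by $P(\ell,\ell',\underline\ell)$. First I would extract a common prefactor. Since $N$ is odd, $(-1)^{N+\underline m}=(-1)^{\underline m+1}$, so both constants carry the same factor
\[
G\coloneqq(1-(-1)^{\ell+\ell'+\underline\ell})(-1)^{\underline m+1}\sqrt{(2\ell+1)(2\ell'+1)(2\underline\ell+1)}\left(\begin{smallmatrix}\ell&\ell'&\underline\ell\\ m&m'&-\underline m\end{smallmatrix}\right),
\]
and therefore
\[
C^{(N)\underline\ell\underline m}_{\ell m,\ell' m'}-C^{\underline\ell\underline m}_{\ell m,\ell' m'}=G\left((N+1)^{3/2}\left\{\begin{smallmatrix}\ell&\ell'&\underline\ell\\ N/2&N/2&N/2\end{smallmatrix}\right\}-P(\ell,\ell',\underline\ell)\right)\eqqcolon G\,D_N.
\]
By Step~2 in the proof of \autoref{lem:str_const_SH} (the bound $\sqrt{2\ell^*+1}\,|3j|\le1$ applied to the largest of $\ell,\ell',\underline\ell$), one has $|G|\le C\min\{\sqrt{\ell\ell'},\sqrt{\ell\underline\ell},\sqrt{\ell'\underline\ell}\}$, so both items reduce to controlling the scalar $D_N$.

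For item~(1) I would simply write $|D_N|\le(N+1)^{3/2}|\{6j\}|+|P|$. The bound $|P|\le C\min\{\sqrt{\ell\ell'},\sqrt{\ell\underline\ell},\sqrt{\ell'\underline\ell}\}$ is exactly Step~1 of \autoref{lem:str_const_SH}, and the same bound for $(N+1)^{3/2}\{6j\}$ follows from the $6j$-asymptotics already invoked in \autoref{lem:str_const_sun} (Edmonds and Ponzano–Regge), handled separately in the regimes ``all indices bounded'', ``one index bounded'', ``all indices large''. Here the triangle inequality $|\ell-\ell'|\le\underline\ell\le\ell+\ell'$ is essential: whenever one index stays bounded the other two are forced to be comparable, which is what reconciles the crude $N$-powers of \autoref{lem:str_const_sun} with the symmetric quantity $\min\{\sqrt{\ell\ell'},\dots\}$. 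Multiplying by $|G|$ then gives $|C^{(N)}-C|\le C\min\{\ell\ell',\ell\underline\ell,\ell'\underline\ell\}$.

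For item~(2) the extra factor $N^2$ must be compensated by the \emph{quadratic} convergence proved in this appendix; after dividing out $|G|$ the claim becomes $N^2|D_N|\le C\max\{\ell^2,\ell'^2,\underline\ell^2\}\min\{\sqrt{\ell\ell'},\sqrt{\ell\underline\ell},\sqrt{\ell'\underline\ell}\}$. When $\max\{\ell,\ell',\underline\ell\}\ge cN$ this is implied by item~(1), since then $N^2\le C\max\{\ell^2,\ell'^2,\underline\ell^2\}$. In the complementary regime I would expand in $\mu=1/(N+1)$: the product $\prod(1-(p\mu)^2)^{-1/2}$ is even in $\mu$, while the alternating sum $\sum_k(-1)^kS(k,L,\mu)/R$ is odd (this is the parity identity $S(k,L,-\mu)=S(L-k,L,\mu)$, $R(\cdot,k)=R(\cdot,L-k)$ used above), so after the overall factor $(N+1)=\mu^{-1}$ the constant $C^{(N)}$ is an even function of $\mu$ with $D_N=O(\mu^2)$; it then remains to bound the $\mu^2$-coefficient by $C\max\{\ell^2,\dots\}\min\{\sqrt{\ell\ell'},\dots\}$.

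The hard part is exactly this last bound, and it is where the two halves must be glued. A naive estimate of the $\mu^2$-coefficient from $\log\prod(1-(p\mu)^2)^{-1/2}\sim(\ell^3+\ell'^3+\underline\ell^3)\mu^2$ only yields the weaker power $\max\{\ell^3,\dots\}$; to obtain the announced $\max\{\ell^2,\dots\}$ one must expand \emph{both} the product and the alternating sum to order $\mu^3$ and exploit the cancellation between them (equivalently, use the $6j$-recursion in the lower index $N/2$, whose second-order term carries two factors of the small indices and one factor $1/N^2$). The same difficulty resurfaces as an intermediate regime $N^{2/3}\lesssim\max\{\ell,\ell',\underline\ell\}\lesssim N$, in which neither the $\mu$-expansion (the product is no longer close to $1$) nor the reduction to item~(1) applies, and there one must carry the quantitative $O(1/N^2)$ remainders in the Edmonds and Ponzano–Regge formulas and verify they are uniformly of the stated size. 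Making all these remainders uniform in the indices, with the sharp power $\max\{\ell^2,\dots\}$, is the main obstacle I expect.
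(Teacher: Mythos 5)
Your treatment of item (1) is sound and is essentially the paper's route: the paper likewise applies the triangle inequality $|C^{(N)\underline{\ell}\underline{m}}_{\ell m,\ell'm'}-C^{\underline{\ell}\underline{m}}_{\ell m,\ell'm'}|\leq |C^{(N)\underline{\ell}\underline{m}}_{\ell m,\ell'm'}|+|C^{\underline{\ell}\underline{m}}_{\ell m,\ell'm'}|$ together with \autoref{lem:str_const_SH} and \autoref{lem:str_const_sun}, and reconciles the crude powers of $N$ with $\min\{\ell\ell',\ell\underline{\ell},\ell'\underline{\ell}\}$ exactly as you do. The genuine gap is item (2), which you outline but do not prove. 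Your argument covers only two regimes: $\max\{\ell,\ell',\underline{\ell}\}\geq cN$, where item (1) suffices, and small (effectively fixed) indices, where the parity identity in $\mu=1/(N+1)$ gives $D_N=O(\mu^2)$. You then state yourself that the required bound on the $\mu^2$-coefficient by $\max\{\ell^2,\ell'^2,\underline{\ell}^2\}\min\{\sqrt{\ell\ell'},\sqrt{\ell\underline{\ell}},\sqrt{\ell'\underline{\ell}}\}$ is open (your naive estimate only yields the weaker power $\max\{\ell^3,\ell'^3,\underline{\ell}^3\}$, and the needed cancellation between the product term and the alternating sum is not exhibited), and that the whole intermediate window $N^{2/3}\lesssim\max\{\ell,\ell',\underline{\ell}\}\lesssim N$ is untreated. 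As it stands, then, the proposal is an accurate program for item (2), not a proof of it.

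It is worth comparing this with what the paper actually does, because the paper does not carry out the hard expansion you anticipate either. Its proof of item (2) sets $C(\ell,\ell',\underline{\ell},N):=N^2|C^{(N)\underline{\ell}\underline{m}}_{\ell m,\ell'm'}-C^{\underline{\ell}\underline{m}}_{\ell m,\ell'm'}|$, invokes the quadratic convergence established earlier in the appendix (via the same parity identities $S(k,L,-\mu)=S(L-k,L,\mu)$ and $R(\ell,\ell',\underline{\ell},k)=R(\ell,\ell',\underline{\ell},L-k)$ that you quote) to conclude that $C(\ell,\ell',\underline{\ell},N)$ converges to a finite limit $\overline{C}(\ell,\ell',\underline{\ell})$ for fixed indices, combines this with the bound $C(\ell,\ell',\underline{\ell},N)\leq CN^2\min\{\ell\ell',\ell\underline{\ell},\ell'\underline{\ell}\}$ from item (1), and then asserts that $C(\ell,\ell',\underline{\ell},N)$ ``can grow at most as'' $\max\{\ell^2,\ell'^2,\underline{\ell}^2\}\cdot\min\{\ell\ell',\ell\underline{\ell},\ell'\underline{\ell}\}$. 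That interpolation step --- passing from pointwise-in-the-indices convergence plus a crude uniform bound to a growth rate in the indices --- is precisely the uniform remainder control that you identify as the main obstacle; pointwise convergence alone does not determine how $\overline{C}(\ell,\ell',\underline{\ell})$ grows, so the paper's argument is soft at exactly the point where your attempt stops. In short: your diagnosis of where the difficulty lies is sharp and more candid than the paper's proof, but since you halt at the obstacle rather than closing it (e.g., by quantifying the $O(1/N^2)$ remainders in the Edmonds and Ponzano--Regge asymptotics uniformly in the indices, or by the $6j$-recursion argument you sketch), item (2) remains unproven in your proposal.
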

\proof
\begin{enumerate}
\item 
Using Lemma~\ref{lem:str_const_SH} and Lemma~\ref{lem:str_const_sun}, we have that
\begin{align*}
|C^{(N)\underline{\ell}\underline{m}}_{\ell m ,\ell' m'}-C^{\underline{\ell}\underline{m}}_{\ell m ,\ell' m'}|&\leq|C^{(N)\underline{\ell}\underline{m}}_{\ell m ,\ell' m'}|+|C^{\underline{\ell}\underline{m}}_{\ell m ,\ell' m'}|\\
&\leq C(\max\lbrace N,\min\lbrace \ell \ell' ,\ell \underline{\ell},\ell' \underline{\ell}\rbrace\rbrace + \min\lbrace \ell \ell' ,\ell \underline{\ell},\ell' \underline{\ell}\rbrace).
\end{align*}
Since, for any set of admissible indexes, $|C^{(N)\underline{\ell}\underline{m}}_{\ell m ,\ell' m'}-C^{\underline{\ell}\underline{m}}_{\ell m ,\ell' m'}|\rightarrow 0$, for $N\rightarrow\infty$, and $\ell ,\ell' ,\underline{\ell}<N$, the bound can be replaced with $\max\lbrace \ell ,\ell' ,\underline{\ell},\min\lbrace \ell \ell' ,\ell \underline{\ell},\ell' \underline{\ell}\rbrace\rbrace + \min\lbrace \ell \ell' ,\ell \underline{\ell},\ell' \underline{\ell}\rbrace$, and so with $\min\lbrace \ell \ell' ,\ell \underline{\ell},\ell' \underline{\ell}\rbrace$.\\
\item 
We know that there exits a function $C(\ell ,\ell' ,\underline{\ell},N)$ such that 
\[\frac{C(\ell ,\ell' ,\underline{\ell},N)}{N^2}=|C^{(N)\underline{\ell}\underline{m}}_{\ell m ,\ell' m'}-C^{\underline{\ell}\underline{m}}_{\ell m ,\ell' m'}|\]
and $C(\ell ,\ell' ,\underline{\ell},N)\rightarrow \overline{C}(\ell ,\ell' ,\underline{\ell})\in \Rr$, for $N\rightarrow\infty$.
Moreover, 
\[C(\ell ,\ell' ,\underline{\ell},N)\leq CN^2\min\lbrace \ell \ell' ,\ell \underline{\ell},\ell' \underline{\ell}\rbrace.\] 
Hence, since $C(\ell ,\ell' ,\underline{\ell},N)\rightarrow \overline{C}(\ell ,\ell' ,\underline{\ell})\in \Rr$, for $N\rightarrow\infty$, $C(\ell ,\ell' ,\underline{\ell},N)$ can grow at most as $\max\lbrace \ell ^2,\ell'^2,\underline{\ell}^2\rbrace\cdot\min\lbrace \ell \ell' ,\ell \underline{\ell},\ell' \underline{\ell}\rbrace$.
\end{enumerate}
\endproof

\section{Structure constants estimates for the 2-torus}\label{sec:AppB}
In this section we show that the same calculations can be explicitly done also for the Zeitlin's mothe on the 2-torus (see \cite{ze1}).
Let $\omega(x,t) = \sum_{\kk\in\Zz^2_0}\omega_{\kk}(t)e^{i\kk\cdot x}$ be the vorticity field on $\Tt^2$. 
From now on, all the sums are taken excluding the index $0$.
Then, for each $\nn\in\Zz^2_0$, the equations of motion of $\omega_\nn$ are:
\begin{equation}
\dot{\omega}_\nn = B_\nn(\omega):= \sum_{k_1,k_2=-\infty}^\infty \dfrac{\nn\times\kk}{|\kk|^2}\omega_{\nn-\kk}\omega_{\kk},
\end{equation}
where $\nn\times\kk=n_2k_1-k_2n_1$.
Let $W^N(t) = \sum_{k_1,k_2=-(N-1)/2}^{(N-1)/2}\omega_{\kk}(t)T^N_\kk$ be its projection in $\su(N)$.
Then, for each $\nn\in\Zz^2_0$ such that $|n_1|,|n_2|\leq(N-1)/2$, the equations of motion of $\omega_\nn$ are:
\begin{equation}
\dot{\omega}_\nn = B^N_\nn(W^N):= \sum_{k_1,k_2=-(N-1)/2}^{(N-1)/2}\frac{N}{2\pi}\frac{\sin\left(\dfrac{2\pi}{N} \nn\times\kk\right)}{|\kk|^2}\omega_{\nn-\kk}\omega_{\kk},
\end{equation}
where the indices on the $\omega_\nn$ are taken $\mbox{mod} \ N$.

Let us introduce the reminder $r^N(\omega)=B^N(\Pi_N\omega)-\Pi_N B(\iota_N\circ\Pi_N\omega)$, where $\Pi_N:L^2(\Ss^2)\rightarrow\su(N)$ is the orthogonal projection and $\iota_N:\su(N)\rightarrow L^2(\Ss^2)$ is the inclusion such that $\iota_N\circ\Pi_N$ correspond to the standard truncation of the Fourier series.
In components, we have that:
\begin{equation}
r^N(\omega) = \sum_{k_1,k_2=-(N-1)/2}^{(N-1)/2}\left[\frac{N}{2\pi}\frac{\sin\left(\dfrac{2\pi}{N} \nn\times\kk\right)}{|\kk|^2}- \dfrac{\nn\times\kk}{|\kk|^2}\right]\omega_{\nn-\kk}\omega_{\kk}.
\end{equation}

Then, let $\mu_N(dW) = \frac{1}{Z_{d_N}}e^{-1/2\|W\|^2}dW$ be the Gaussian measure on $\su(N)$ and let $W^N:=\iota_N(\omega):\Omega_N\rightarrow\su(N)$ be distributed as $\mu_N$. 
We want to estimate $\Ee^{\mu_N}[\|r^N(\omega)\|^2_{-s}]$, for some $s>0$.
Let us call:
\[
C^N_{\nn,\kk}:=\frac{N}{2\pi}\sin\left(\dfrac{2\pi}{N} \nn\times\kk\right)- \nn\times\kk.
\]
\begin{equation}
\begin{array}{ll}
\Ee^{\mu_N}[\|r^N(\omega)\|^2_{-s}] &= \Ee^{\mu_N}\left[\sum_{n_1,n_2=-(N-1)/2}^{(N-1)/2}\frac{1}{|\nn|^{2s}}\left|\sum_{k_1,k_2=-(N-1)/2}^{(N-1)/2}\dfrac{C^N_{\nn,\kk}}{|\kk|^2}\omega_{\nn-\kk}\omega_{\kk}\right|^2\right]\\
&=\Ee^{\mu_N}\left[\sum_{n_1,n_2=-(N-1)/2}^{(N-1)/2}\frac{1}{|\nn|^{2s}}\sum_{\kk,\kk'}\dfrac{C^N_{\nn,\kk}}{|\kk|^2}\dfrac{C^N_{\nn,\kk'}}{|\kk'|^2}\omega_{\nn-\kk}\omega_{\kk}\overline{\omega}_{\nn-\kk'}\overline{\omega}_{\kk'}\right]\\
&=\sum_{n_1,n_2=-(N-1)/2}^{(N-1)/2}\frac{1}{|\nn|^{2s}}\sum_{\kk,\kk'}\dfrac{C^N_{\nn,\kk}}{|\kk|^2}\dfrac{C^N_{\nn,\kk'}}{|\kk'|^2}\Ee^{\mu_N}\left[\omega_{\nn-\kk}\omega_{\kk}\overline{\omega}_{\nn-\kk'}\overline{\omega}_{\kk'}\right].\\
\end{array}
\end{equation}
By the Isserlis-Wick formula:
\[\Ee^{\mu_N}\left[\omega_{\nn-\kk}\omega_{\kk}\overline{\omega}_{\nn-\kk'}\overline{\omega}_{\kk'}\right]=\delta_\kk^{\kk'} + \delta_{\nn-\kk}^{\kk'}.\]
Hence, using the fact that $C^N_{\nn,\kk}=-C^N_{\nn,\nn-\kk}$, we get:
\begin{equation}
\begin{array}{ll}
\Ee^{\mu_N}[\|r^N(\omega)\|^2_{-s}] &= \sum_{n_1,n_2=-(N-1)/2}^{(N-1)/2}\frac{1}{|\nn|^{2s}}\left(\sum_{\kk}\dfrac{(C^N_{\nn,\kk})^2}{|\kk|^4}-\sum_{\kk}\dfrac{(C^N_{\nn,\kk})^2}{|\kk|^2|\nn-\kk|^2}\right)\\
&= \sum_{n_1,n_2=-(N-1)/2}^{(N-1)/2}\frac{1}{|\nn|^{2s}}\left(\sum_{\kk}(C^N_{\nn,\kk})^2\dfrac{|\nn-\kk|^2-|\kk|^2}{|\kk|^4|\nn-\kk|^2}\right)\\
&\leq \sum_{n_1,n_2=-(N-1)/2}^{(N-1)/2}\frac{1}{|\nn|^{2s}}\left(\sum_{\kk}(C^N_{\nn,\kk})^2\dfrac{|\nn|(|\nn-\kk|+|\kk|)}{|\kk|^4|\nn-\kk|^2}\right).
\end{array}
\end{equation}
Now, using the fact that $|\sin x - x|\leq C |x|^3$, we get that: 
\[|C^N_{\nn,\kk}|\leq C N|\frac{1}{N} \nn\times\kk|^3=C \frac{1}{N^2}| \nn\times\kk|^3.\]
Therefore, we have that:
\begin{equation}
\begin{array}{ll}
\Ee^{\mu_N}[\|r^N(\omega)\|^2_{-s}] &\leq C\sum_{n_1,n_2=-(N-1)/2}^{(N-1)/2}\frac{1}{|\nn|^{2s-1}}\left(\sum_{\kk}\dfrac{| \nn\times\kk|^6(|\nn-\kk|+|\kk|)}{N^4|\kk|^4|\nn-\kk|^2}\right)\\
&\leq \dfrac{C}{N^4}\sum_{n_1,n_2=-(N-1)/2}^{(N-1)/2}\frac{1}{|\nn|^{2s-7}}\sum_{\kk}\dfrac{|\kk|^2}{|\nn-\kk|}+\dfrac{|\kk|^3}{|\nn-\kk|^2}\\
&\leq \dfrac{C}{N^4}\sum_{n_1,n_2=-(N-1)/2}^{(N-1)/2}\frac{1}{|\nn|^{2s-7}}\sum_{\kk}\dfrac{|\nn-\kk|^2+|\nn|^2}{|\nn-\kk|}+\dfrac{|\nn-\kk|^3+|\nn|^3}{|\nn-\kk|^2}\\
&\leq \dfrac{C}{N}\sum_{n_1,n_2=-(N-1)/2}^{(N-1)/2}\frac{1}{|\nn|^{2s-7}}+\dfrac{C}{N^4}\sum_{n_1,n_2=-(N-1)/2}^{(N-1)/2}\frac{|\nn|^2N\log N + |\nn|^3\log N}{|\nn|^{2s-7}}\\
&\leq C\left(\dfrac{N^{9-2s}}{N}+\dfrac{N^{12-2s}\log N}{N^4}\right)\\
\end{array}
\end{equation}
which goes to $0$ for $N\rightarrow\infty$ for $s>9/2$ .

\bibliographystyle{plain}
\bibliography{biblio.bib}

\end{document}